\newcommand{\Z}{\mathbb{Z}}
\newcommand{\N}{\mathbb{N}}
\newcommand{\pr}{\mathbb{P}}
\newcommand{\F}{\mathscr{F}}
\newcommand{\G}{\mathscr{G}}
\newcommand{\Ol}{\mathcal{O}}
\begin{document}

\newtheorem{theorem}{Theorem}[section]
\newtheorem{lemma}[theorem]{Lemma}
\newtheorem{definition}[theorem]{Definition}
\newtheorem{proposition}[theorem]{Proposition}
\newtheorem{corollary}[theorem]{Corollary}
\newtheorem{remark}[theorem]{Remark}

\title{On the existence of Fourier-Mukai kernels}
\author{\bf{Alice Rizzardo} }

\begin{abstract}A theorem by Orlov states that any equivalence $F:D^{b}_{\mathrm{Coh}}(X) \to D^{b}_{\mathrm{Coh}}(Y)$ between the bounded derived categories of coherent sheaves of two smooth projective varieties $X$ and $Y$
is isomorphic to a Fourier-Mukai transform $\Phi_{E}(-)=Rp_{2*}(E\stackrel{L}{\otimes} Lp_{1}^{*}(-))$, where the kernel $E$ is in  $D^{b}_{\mathrm{Coh}}(X\times Y)$. In the case of an exact functor which is not necessarily fully faithful, we compute some sheaves that play the role of the cohomology sheaves of the kernel, and that are isomorphic to the latter whenever an isomorphism $F\cong\Phi_{E}$ exists. We then exhibit a class of functors that are not full or faithful and still satisfy the above result.
\end{abstract}

\maketitle

\section{Introduction}
Let $X$, $Y$ be smooth projective varieties over an algebraically closed field $k$. Consider an exact functor 
$$F:D^{b}_{\mathrm{Coh}}(X) \to D^{b}_{\mathrm{Coh}}(Y)\mathrm{.}$$
Orlov proved in \cite{orlov} that, if $F$ is a fully faithful functor and has a right adjoint, then there exists an object $E\in D^{b}_{\mathrm{Coh}}(X\times Y)$ such that $X$ is isomorphic to the Fourier-Mukai transform $\Phi_{E}$, defined as
$$\Phi_{E}(-)=Rp_{2*}(E\stackrel{L}{\otimes} Lp_{1}^{*}(-))\mathrm{.}$$

The requirement that $F$ needs to have a right adjoint is actually unnecessary, since by \cite[Theorem 1.1]{generators}, every exact functor $F:D^{b}_{\mathrm{Coh}}(X) \to D^{b}_{\mathrm{Coh}}(Y)$ has a left and a right adjoint. (For an explanation on why this is true, see for example \cite[Remark 2.1]{twisted}.)

There is evidence that this theorem should generalize to the case where the functor is not full or faithful. Canonaco and Stellari, in their paper \cite{twisted}, partially following Orlov's proof, give a weaker condition for the functor for it to be still isomorphic to a Fourier-Mukai transform. The condition is in particular satisfied if the functor is full. This is however a bittersweet result: in fact, in \cite{full} together with Orlov they also proved that when $X$ and $Y$ are smooth projective varieties over a field of characteristic 0, if a functor is full then it is also faithful.

Although we are not able to prove that an isomorphism exists in general, given any exact functor as above, we make some progress towards identifying the kernel of the Fourier-Mukai functor, if it exists - namely, we can explicitly compute the kernel's cohomology sheaves, which we will denote by $\mathscr{Bla}^{i}$:

\begin{theorem}\label{all blas}
Let $X$, $Y$ be smooth projective varieties over an algebraically closed field $k$, and $F:D^{b}_{\mathrm{Coh}}(X) \to D^{b}_{\mathrm{Coh}}(Y)$ an exact functor. There exist sheaves $\mathscr{Bla}^{i}\in\mathrm{Coh}(X\times Y)$, with $i \in \mathbb{Z}$, all but a finite number of them equal to zero, such that for every locally free sheaf of finite rank $\mathscr{E}$ on $X$ and for all $n\in \mathbb{Z}$ there are functorial maps
$$ \mathscr{H}^{i}(F(\mathscr{E}(n)))\to p_{2*}(\mathscr{Bla}^{i}\otimes p_{1}^{*}\mathscr{E}(n))$$
which are isomorphisms for $n$ sufficiently high (depending on $\mathscr{E}$).

In particular, if $F\cong \Phi_{E}$, i.e. $F$ is the Fourier-Mukai transform with kernel $E$, then $\mathscr{H}^{i}(E)\cong \mathscr{B}^{i}$ for all $i\in\mathbb{Z}$.
\end{theorem}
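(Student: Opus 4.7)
The plan is to reconstruct the putative cohomology sheaves of a Fourier--Mukai kernel via a relative Serre construction on $p_{2}\colon X\times Y\to Y$. Fix a very ample line bundle $\mathscr{O}_{X}(1)$ and set $R:=\bigoplus_{d\geq 0}H^{0}(X,\mathscr{O}_{X}(d))$. If $F$ were equal to $\Phi_{E}$ for some kernel $E$, then for $n\gg 0$ the projection formula together with relative Serre vanishing would give
$$\mathscr{H}^{i}\bigl(F(\mathscr{O}_{X}(n))\bigr)\cong p_{2*}\bigl(\mathscr{H}^{i}(E)\otimes p_{1}^{*}\mathscr{O}_{X}(n)\bigr),$$
so $\mathscr{H}^{i}(E)$ would be entirely determined by the datum $\bigoplus_{n\gg 0}\mathscr{H}^{i}(F(\mathscr{O}_{X}(n)))$, viewed as a graded $R$-module with values in $\mathrm{Coh}(Y)$, via the relative Serre/Proj correspondence for $p_{2}$. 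The strategy is to carry out this reconstruction directly from $F$, with no representability assumption.

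First, for each $i\in\Z$ I would set $M^{i}_{n}:=\mathscr{H}^{i}(F(\mathscr{O}_{X}(n)))$, and promote $M^{i}:=\bigoplus_{n}M^{i}_{n}$ to a graded $R$-module in $\mathrm{Coh}(Y)$ by letting $s\in R_{d}$ act as $\mathscr{H}^{i}(F(s\cdot(-)))\colon M^{i}_{n}\to M^{i}_{n+d}$; functoriality of $F$ and of $\mathscr{H}^{i}$ gives associativity and the unit axiom. I would then define $\mathscr{Bla}^{i}$ as the coherent sheaf on $X\times Y$ associated to $M^{i}$ under the relative Serre correspondence. By construction the natural comparison $M^{i}_{n}\to p_{2*}(\mathscr{Bla}^{i}\otimes p_{1}^{*}\mathscr{O}_{X}(n))$ is then an isomorphism for $n\gg 0$, and in lower degrees it is the canonical edge map attached to the same correspondence.

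For a general locally free $\mathscr{E}$ of finite rank, I would build the map $\mathscr{H}^{i}(F(\mathscr{E}(n)))\to p_{2*}(\mathscr{Bla}^{i}\otimes p_{1}^{*}\mathscr{E}(n))$ by choosing, for $n$ large, a finite resolution of $\mathscr{E}(n)$ by direct sums of twists $\mathscr{O}_{X}(-m)$, applying both sides, and comparing the resulting hypercohomology spectral sequences term by term using the line-bundle case already handled. After a further twist by $\mathscr{O}_{X}(n')$ sufficient to kill all extraneous higher $R^{j}p_{2*}$'s and higher Tors along the resolution, a five-lemma/spectral sequence argument upgrades this natural transformation to an isomorphism, yielding the stated range of $n$.

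The main obstacle is the finiteness input. I would need a uniform cohomological amplitude bound for $\{F(\mathscr{O}_{X}(n))\}_{n\in\Z}$, so that $\mathscr{Bla}^{i}=0$ for all but finitely many $i$, and a finite generation statement for each $M^{i}$ as a graded $R$-module on $Y$, so that $\mathscr{Bla}^{i}$ is in fact coherent on $X\times Y$. Both should follow from Serre-type regularity arguments, leveraging the existence of left and right adjoints of $F$ guaranteed by \cite{generators} to transfer bounds from the images of a finite generating set of $D^{b}_{\mathrm{Coh}}(X)$ to the entire family $\mathscr{O}_{X}(n)$. Handling these uniform bounds is where the bulk of the technical work lies.
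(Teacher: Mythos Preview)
Your overall strategy---reconstruct each $\mathscr{Bla}^{i}$ from the graded $\Gamma_{*}(\mathscr{O}_{X})\otimes\mathscr{O}_{Y}$-module $\bigoplus_{n}\mathscr{H}^{i}(F(\mathscr{O}_{X}(n)))$ via the relative Serre/Proj correspondence for $p_{2}$---is exactly the paper's, and your identification of boundedness and finite generation as the two obstacles is accurate. Boundedness is handled as you suggest (the paper cites Orlov's Lemma~2.4, which is proved by precisely the generator/adjoint reasoning you describe).

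The genuine gap is at finite generation. The paper's mechanism is a \emph{descending induction on $i$}, and this is not optional. One fixes a surjection $\mathscr{O}_{X}^{\oplus s}\twoheadrightarrow\mathscr{O}_{X}(1)$ with locally free kernel $K$; twisting and applying $F$ gives a long exact sequence relating $\mathscr{H}^{i}(F(\mathscr{E}(n)))^{\oplus s}\to\mathscr{H}^{i}(F(\mathscr{E}(n+1)))$ to the map $\mathscr{H}^{i+1}(F(K\otimes\mathscr{E}(n)))\to\mathscr{H}^{i+1}(F(\mathscr{E}(n)))^{\oplus s}$. The induction hypothesis identifies the latter map, for $n\gg 0$, with $p_{2*}(\mathscr{Bla}^{i+1}\otimes p_{1}^{*}K(n))\hookrightarrow p_{2*}(\mathscr{Bla}^{i+1}\otimes p_{1}^{*}\mathscr{E}(n))^{\oplus s}$, which is injective; only then does the surjection $\mathscr{H}^{i}(F(\mathscr{E}(n)))^{\oplus s}\twoheadrightarrow\mathscr{H}^{i}(F(\mathscr{E}(n+1)))$ follow, yielding finite generation of $M^{i}$. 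Your proposed ``Serre-type regularity via adjoints'' does not supply this injectivity at level $i+1$: the modules $M^{i}$ are not independent of one another, and without controlling $M^{i+1}$ first there is no evident way to bound the growth of $M^{i}$. This inductive coupling through the long exact sequence is the crux of the argument and is missing from your plan.

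A secondary difference: for a general locally free $\mathscr{E}$ the paper does not resolve and compare spectral sequences. It reruns the same construction with $\mathscr{E}$ in place of $\mathscr{O}_{X}$ to produce a sheaf $\mathscr{Bla}^{i}_{\mathscr{E}}$, then observes that $\mathscr{E}\mapsto\mathscr{Bla}^{i}_{\mathscr{E}}$ is additive, right exact on short exact sequences of locally free sheaves, and satisfies $\mathscr{Bla}^{i}_{\mathscr{E}(n)}\cong\mathscr{Bla}^{i}_{\mathscr{E}}\otimes p_{1}^{*}\mathscr{O}_{X}(n)$; presenting $\mathscr{E}$ by twists of $\mathscr{O}_{X}$ then forces $\mathscr{Bla}^{i}_{\mathscr{E}}\cong\mathscr{Bla}^{i}\otimes p_{1}^{*}\mathscr{E}$. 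This is cleaner than a spectral sequence comparison, and note that the right exactness step again uses the descending induction (exactness at level $i$ needs the level-$(i+1)$ identification).
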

%

%

We are then able, in a special case, to construct an isomorphism between a class of functors that are not full or faithful and a Fourier-Mukai transform:
\begin{theorem}\label{generaliso}
Let $X$ and $Y$ be smooth projective varieties over an algebraically closed field $k$, with $X$ of dimension one,  and $F:D^{b}_{\mathrm{Coh}}(X)\to D^{b}_{\mathrm{Coh}}(Y)$ an exact functor. Assume that there exists an integer $M$ such that the sheaves $\mathscr{Bla}^{i}$ associated to $F$ as in Theorem \ref{all blas} are zero  for $i \neq M$, and that $\mathscr{Bla}^{M}$ is a direct sum of skyscraper sheaves $\mathscr{Bla}^{M}=\bigoplus_{i=1}^{t} k(p_{i},q_{i})$. Let $\Phi$ be the Fourier-Mukai transform associated to the complex given by the sheaf $\mathscr{Bla}^{M}$ placed in degree $M$, $\Phi=\Phi_{\mathscr{Bla}^{M}[-M]}$. Then there exists an isomorphism of functors $s:  \Phi \to F$.
\end{theorem}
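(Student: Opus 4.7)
The plan is to construct the natural transformation $s$ in stages, starting from the canonical isomorphism on line bundles with large twist provided by Theorem~\ref{all blas} and extending step by step over all of $D^b_{\mathrm{Coh}}(X)$ via short exact sequences completed to morphisms of triangles.

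The base stage is Theorem~\ref{all blas} itself: for any locally free $\mathscr{E}$ and $n \gg 0$, the assumption $\mathscr{Bla}^i = 0$ for $i \neq M$ together with the fact that $\mathscr{Bla}^M$ is a direct sum of skyscraper sheaves force both $F(\mathscr{E}(n))$ and $\Phi(\mathscr{E}(n))$ to be concentrated in degree $M$, so the functorial isomorphism on cohomology sheaves lifts canonically to an isomorphism $s_{\mathscr{E}(n)}\colon \Phi(\mathscr{E}(n)) \to F(\mathscr{E}(n))$ in $D^b(Y)$, natural in $\mathscr{E}$. Applying this to the sequence $0 \to \mathscr{O}_X(n-p) \to \mathscr{O}_X(n) \to k(p) \to 0$ for $n \gg 0$, one completes the induced commutative square to a morphism of triangles by axiom TR3, obtaining $s_{k(p)}$ for every closed point $p \in X$; it is an isomorphism by the five lemma. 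Since any torsion sheaf on $X$ is an iterated extension of skyscrapers, this produces $s_T$ for every torsion sheaf $T$. For a locally free sheaf $E$, the exact sequence $0 \to E \to E(n) \to E|_D \to 0$, where $D$ is the divisor of zeros of a generic section of $\mathscr{O}_X(n)$, relates $E$ to $E(n)$ (already handled) and to the torsion sheaf $E|_D$, defining $s_E$ via TR3. Because $X$ is a smooth curve, every coherent sheaf splits canonically as $A \cong A_{\mathrm{tors}} \oplus A_{\mathrm{free}}$ (the extension class vanishes by Serre duality), and moreover every object of $D^b_{\mathrm{Coh}}(X)$ decomposes as $\bigoplus_i \mathscr{H}^i[-i]$; we extend $s$ to all of $D^b_{\mathrm{Coh}}(X)$ by direct summing the pieces.

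The principal obstacle is the non-uniqueness in TR3: the completed third arrow is determined only up to an element of a certain Hom-group, so one must verify at each stage that $s_A$ is independent of the auxiliary choices (the integer $n$, the section of $\mathscr{O}_X(n)$, the filtration of the torsion part, the splitting into cohomology sheaves). The strategy is to exploit the canonical base case: any two choices can be compared by a further morphism of triangles whose outer squares are canonically determined by Theorem~\ref{all blas}, and naturality of the base case then forces the two candidates for $s_A$ to agree. The same comparison mechanism establishes that $s$ is a natural transformation on morphisms of $D^b_{\mathrm{Coh}}(X)$; on a smooth curve this reduces to checking compatibility with $\mathrm{Hom}$- and $\mathrm{Ext}^1$-classes between coherent sheaves, since higher $\mathrm{Ext}$'s vanish.
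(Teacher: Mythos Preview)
Your outline has the right shape and is close to the paper's approach, but the step you flag as ``the principal obstacle'' --- the TR3 non-uniqueness --- is also where your argument has a genuine gap. You assert that comparing two choices via a further morphism of triangles ``forces the two candidates for $s_A$ to agree,'' but this cannot work: the comparison triangle carries exactly the same TR3 ambiguity. Concretely, in completing $0 \to K \to \mathcal{O}_X^{\oplus n} \to Q \to 0$ to a map $s_Q$, the indeterminacy lies in $\mathrm{Hom}(\Phi(Q), F(K)[1])$, which contains $\mathrm{Ext}^1\bigl(\mathscr{H}^M(\Phi(Q)),\mathscr{H}^M(F(K))\bigr)$ between skyscraper sheaves on $Y$ --- typically nonzero. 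The paper does not claim uniqueness of the TR3 fill-in; instead it proves a separate injectivity statement (Lemma~\ref{stillzero}): for any surjection $A' \twoheadrightarrow A$ on $X$, the induced map
\[
\mathrm{Ext}^1\bigl(p_{2*}(\mathscr{Bla}^M\otimes p_1^*A),\,-\bigr)\longrightarrow \mathrm{Ext}^1\bigl(p_{2*}(\mathscr{Bla}^M\otimes p_1^*A'),\,-\bigr)
\]
is injective, because a surjection of finite sums of $k(q_i)$'s splits as vector spaces. This lets one promote ``the square commutes after precomposing with $\Phi(A') \to \Phi(A)$'' to ``the square commutes,'' and that is how both well-definedness of $s_Q$ and compatibility with morphisms are established. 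An analogous explicit $\mathrm{Ext}^1$ computation is needed for the boundary-map square in the triangle check; this is the real technical content, and your sketch does not supply it.

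There is also a difference in the base case. Rather than only having $s_{\mathscr{E}(n)}$ for $n\gg 0$, the paper first proves (Propositions~\ref{bla_for_free}, \ref{lastbla}, \ref{oncohomology}, \ref{isoforfree}) that $F(\mathscr{E})$ is concentrated in degree $M$ for \emph{every} locally free $\mathscr{E}$, and more generally constructs a functorial isomorphism of $\delta$-functors $\mathscr{H}^i(F(-))\cong \mathscr{H}^i(\Phi(-))$ on all of $\mathrm{Coh}(X)$. This gives a canonical $s_{\mathscr{E}}$ on all locally free sheaves with no TR3 involved, and supplies the cohomological control used in the injectivity arguments above. (A minor slip in your write-up: the splitting $A\cong A_{\mathrm{tors}}\oplus A_{\mathrm{free}}$ is not canonical --- only the torsion subsheaf is --- and the paper explicitly checks independence of the chosen complement.)
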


Even when we don't know how to build a kernel out of the sheaves $\mathscr{Bla}^{i}$ obtained in Theorem \ref{all blas}, these sheaves turn out to have good properties in their own right. As an example, in section \ref{ss} we show that the analogue of the Cartan-Eilenberg Spectral Sequence converges when the dimension of $X$ is one.

\subsection*{Notation}From now on, $X$ and $Y$ will be smooth projective varieties over an algebraically closed field $k$; $p_{1}$ and $p_{2}$ will be the projections $X\times Y\to X$ and $X\times Y\to Y$ respectively; $F:D^{b}_{\mathrm{Coh}}(X) \to D^{b}_{\mathrm{Coh}}(Y)$ will be an exact functor, i.e. an additive functor that commutes with shifts and preserves triangles; and $\Ol_{X}(1)$ will be a very ample line bundle on $X$. 

Given a morphism $p$, we will use $p_{*}$ and $p^{*}$ to indicate pushforward and pullback on coherent sheaves, whereas pullback and pushforward in the derived category will always be indicated with $Lp^{*}$ and $Rp_{*}$ unless they coincide with the regular pullback and pushforward, in which case both notations will be used interchangeably.  

For a smooth variety $X$, $\mathrm{Coh}(X)$ will be considered as a full subcategory of $D^{b}_{\mathrm{Coh}}(X)$ by associating to a sheaf the complex given by that sheaf placed in degree zero. Hence we will write $F(\mathscr{E})$ to indicate $F(\mathscr{E}[0])$.

\subsection*{Acknowledgements}This paper is derived from part of the author's PhD thesis. The author thanks her thesis advisor Aise Johan de Jong for suggesting the problem as well as providing invaluable guidance over the years. The last draft of this paper was written while at SISSA, in Trieste.

\section{Determining the cohomology sheaves of the prospective kernel}

Consider an exact functor $F:D^{b}_{\mathrm{Coh}}(X) \to D^{b}_{\mathrm{Coh}}(Y)$. If we know that $F$ is isomorphic to a Fourier-Mukai transform $\Phi_{E}$, then we are of course able to compute the cohomology sheaves $\mathscr{Bla}^{i}=\mathscr{H}^{i}(E)$ corresponding to $E$. Even if we don't know what $E$ is, or even if it exists, we are able to compute some sheaves on $X\times Y$ that, if the functor comes from a Fourier-Mukai transform, turn out to be the cohomology sheaves of the corresponding kernel. 


\begin{lemma}\label{bla_exists}
Let $X$, $Y$ be smooth projective varieties over an algebraically closed field, $\mathscr{O}_{X}(1)$ a very ample invertible sheaf on $X$. There exists an equivalence of categories between the category of coherent sheaves on $X\times Y$ and the category of graded coherent $\Gamma_{*}(\Ol_{X})\otimes \Ol_{Y}$-modules $\mathscr{M}=\oplus_{k}\mathscr{M}_{k}$ such that $\oplus_{k\geq n} \mathscr{M}_{k}$ is finitely generated for some n, where two coherent sheaves are identified if they agree in sufficiently high degree.

Moreover, if this correspondence associates a sheaf $\oplus \mathscr{M}_{n}$ on $Y$ to a sheaf  $\mathscr{Bla}$ on $X\times Y$, there exists a functorial map of graded $\Gamma_{*}(\Ol_{X})\otimes \Ol_{Y}$-modules
$$\oplus \mathscr{M}_{n}\to \oplus p_{2*}(\mathscr{Bla}\otimes \mathscr{O}_{X}(n))$$
which is an isomorphism on the $n^{th}$ graded piece for $n$ sufficiently high.
\end{lemma}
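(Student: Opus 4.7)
The plan is to recognize this lemma as the relative version, over the base $Y$, of Serre's fundamental theorem relating finitely generated graded modules to coherent sheaves on a projective scheme. Since $\Ol_{X}(1)$ is very ample, there is a closed immersion of $X$ into the projective space $\pr(\Gamma(X,\Ol_{X}(1))^{\vee})$ whose homogeneous coordinate ring (up to nilpotents) is $\Gamma_{*}(\Ol_{X})=\bigoplus_{n\geq 0}\Gamma(X,\Ol_{X}(n))$, so $X\cong\mathrm{Proj}(\Gamma_{*}(\Ol_{X}))$. Base-changing along the structure morphism $Y\to\mathrm{Spec}\,k$ gives the identification
\[
X\times Y\cong\mathrm{Proj}_{Y}(\Gamma_{*}(\Ol_{X})\otimes\Ol_{Y}),
\]
under which the tautological Serre twist $\Ol(n)$ on the relative Proj pulls back to $p_{1}^{*}\Ol_{X}(n)$.

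Next I would construct the two candidate functors and verify that they are quasi-inverse after passing to the quotient that identifies graded modules agreeing in sufficiently high degree. In one direction, for $\mathscr{Bla}\in\mathrm{Coh}(X\times Y)$ I send it to
\[
\Gamma_{*}(\mathscr{Bla}):=\bigoplus_{n}p_{2*}\bigl(\mathscr{Bla}\otimes p_{1}^{*}\Ol_{X}(n)\bigr),
\]
whose graded $\Gamma_{*}(\Ol_{X})\otimes\Ol_{Y}$-module structure comes from the multiplication maps $\Ol_{X}(m)\otimes\Ol_{X}(n)\to\Ol_{X}(m+n)$ together with the projection formula. In the other direction, for a graded module $\mathscr{M}$ satisfying the stated finite-generation hypothesis, the relative $\widetilde{(-)}$ construction on $\mathrm{Proj}_{Y}$ produces a coherent sheaf on $X\times Y$; this is checked by restricting to an affine open $U\subset Y$, where $\mathrm{Proj}_{U}$ is an ordinary projective scheme over $\Gamma(U,\Ol_{Y})$ and the construction reduces to the classical $\widetilde{(-)}$ functor of Hartshorne.

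The final step is the verification that the unit and counit of this adjunction are isomorphisms in the appropriate quotient category, and this simultaneously produces the functorial map in the ``moreover'' clause as the unit
\[
\bigoplus\mathscr{M}_{n}\longrightarrow\bigoplus p_{2*}\bigl(\widetilde{\mathscr{M}}\otimes p_{1}^{*}\Ol_{X}(n)\bigr).
\]
Coherence of each $p_{2*}\bigl(\mathscr{Bla}\otimes p_{1}^{*}\Ol_{X}(n)\bigr)$ for $n\gg 0$, finite generation of the truncation $\bigoplus_{n\geq n_{0}}\Gamma_{*}(\mathscr{Bla})_{n}$, and the fact that the unit is an isomorphism on each graded piece for $n\gg 0$ all follow from Serre vanishing and finite generation applied relatively over $Y$: cover $Y$ by affine opens, use flat base change to reduce to the classical statement on $X\times\mathrm{Spec}(R)$, and patch. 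The main obstacle is precisely this relative Serre step—ensuring that the bound $n_{0}$ can be chosen uniformly on $Y$, which ultimately rests on the noetherianness of $Y$ together with the fact that finite generation is preserved under the tensor product $-\otimes_{k}\Ol_{Y}$. Once this uniformity is in hand, the equivalence and the functoriality of the comparison map follow by the usual Serre-theoretic bookkeeping.
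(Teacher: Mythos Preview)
Your proposal is correct and follows essentially the same approach as the paper: both invoke the relative Serre correspondence between $\mathrm{Coh}(\mathrm{Proj}_{Y}(\Gamma_{*}(\Ol_{X})\otimes\Ol_{Y}))$ and graded modules modulo agreement in high degree. The paper's own proof is in fact terser than yours---it simply cites the relevant results in EGA~II (for the relative $\widetilde{(-)}$ construction and the equivalence) and Serre's FAC (for the fact that the comparison map is an isomorphism in high degree, checked locally on $Y$)---so your write-up is effectively an unpacking of exactly those references.
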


\begin{proof}
This is a routine application of \cite[3.2.4, 3.3.5, 3.4.3, 3.4.5, 3.3.5.1]{EGA2}. The fact that we get an isomorphism $\oplus \mathscr{M}_{n}\to \oplus p_{2*}(\mathscr{Bla}\otimes \mathscr{O}_{X}(n))$ in large enough degree can be checked locally and hence follows by  \cite[\S 65, Proposition 5]{FAC}.
\end{proof}

\begin{proof}[proof of Theorem \ref{all blas}]
By \cite[Lemma 2.4]{orlov}, we can assume that $F$ is bounded, i.e. that $F(\mathscr{F})\in D^{[a,b]}_{\mathrm{Coh}}(Y)$ for all coherent sheaves $\mathscr{F}$ on $X$, so that $\mathscr{H}^{i}(F(\mathscr{E}))=0$ for $i\notin [a,b]$. It follows immediately that there are integers $M$ and $N$ such that $F(\mathscr{E})\in D^{[M,N]}_{\mathrm{Coh}}(Y)$ for all locally free sheaves $F(\mathscr{E})$ of finite rank on $X$. Therefore we can take $\mathscr{Bla}^{i}=0$ for $i\notin [M,N]$.

We will proceed by descending induction on the cohomology degree $i$. Assume we found the sheaves $\mathscr{Bla}^{N},\mathscr{Bla}^{N-1},\ldots,\mathscr{Bla}^{i+1}$ satisfying the conclusions of the Theorem and let us compute the sheaf $\mathscr{Bla}^{i}$. To do this we will proceed in two steps: first we will construct sheaves $\mathscr{Bla}^{i}_{\mathscr{E}}$ for all locally free sheaves $\mathscr{E}$ of finite rank, as well as maps
$$ \mathscr{H}^{i}(F(\mathscr{E}(n)))\to p_{2*}(\mathscr{Bla}^{i}_{\mathscr{E}}\otimes p_{1}^{*}\Ol_{X}(n))$$
that are isomorphisms for $n$ sufficiently high, depending on $\mathscr{E}$ and $i$. Then we will show that 
$$\mathscr{Bla}^{i}_{\mathscr{E}}=\mathscr{Bla}^{i}_{\Ol_{X}}\otimes p_{1}^{*}\mathscr{E}\mathrm{.}$$

For the first step, the key is showing that the sheaf $\bigoplus_{n>n_{0}} \mathscr{H}^{i}(F(\mathscr{E}(n)))$ on $Y$ is finitely generated for each $n_{0}$ as a $\Gamma_{*}(X,\Ol_{X})\otimes \Ol_{Y}$-module. To do this, proceed as follows: let $s$ be an integer such that we have a surjection $ \Ol_{X}^{\oplus s}\to \Ol_{X}(1)$. Let $\mathscr{E}$ be a locally free sheaf of finite rank on $X$. Then by tensoring the map above with $\mathscr{E}$ and twisting by $n$ we have a short exact sequence of locally free sheaves
$$0\to K(n)\to \mathscr{E}^{\oplus s}(n)\to \mathscr{E}(n+1)\to 0\mathrm{.}$$
Hence
$$0\to p_{1}^{*}(K(n)) \to p_{1}^{*}(\mathscr{E}(n)^{\oplus s})\to p_{1}^{*}(\mathscr{E}(n+1))\to 0$$
is also a short exact sequence of locally free sheaves, and tensoring with $\mathscr{Bla}^{i+1}$ will yield another short exact sequence:
$$0\to \mathscr{Bla}^{i+1}\otimes p_{1}^{*}K(n) \to \mathscr{Bla}^{i+1}\otimes p_{1}^{*}\mathscr{E}^{\oplus s}\to \mathscr{Bla}^{i+1}\otimes p_{1}^{*}\mathscr{E}(n+1)\to 0\mathrm{.}$$
Moreover, since $p_{1}^{*}\Ol_{X}$ is very ample with respect to $X\times Y\to Y$, for $n$ high enough (depending on $K$) the pushforward to $Y$ will still be exact:
$$0\to p_{2*}(\mathscr{Bla}^{i+1}\otimes p_{1}^{*}K(n)) \to p_{2*}(\mathscr{Bla}^{i+1}\otimes p_{1}^{*}\Ol_{X}(n)^{\oplus s})\to p_{2*}(\mathscr{Bla}^{i+1}\otimes p_{1}^{*}\Ol_{X}(n+1))\to 0\mathrm{.}$$
Hence we get a commutative diagram
$$\xymatrix@C=5mm{ &\mathscr{H}^{i+1}(F(K(n)) \ar[d]\ar[r] 			& \mathscr{H}^{i+1}(F(\mathscr{E}(n)^{\oplus s} ))\ar[r]\ar[d] 				&\mathscr{H}^{i+1}(F(\mathscr{E}(n+1)))\ar[d] \\
0\ar[r] 	& p_{2*}(\mathscr{Bla}^{i+1}\otimes p_{1}^{*}K(n)) \ar[r] & p_{2*}(\mathscr{Bla}^{i+1}\otimes p_{1}^{*}\mathscr{E}(n)^{\oplus s}) \ar[r] & p_{2*}(\mathscr{Bla}^{i+1}\otimes p_{1}^{*}\mathscr{E}(n+1))\ar[r] & 0
}$$
and for $n$ high enough depending on $K$ and $\mathscr{E}$, the vertical arrows are isomorphisms by the induction hypothesis; therefore the top sequence is also exact. Hence, moving down to the $\text{i}^{\text{th}}$ cohomology sheaves, for $n$ sufficiently high we also get a surjection
$$\mathscr{H}^{i}(F(\mathscr{E}(n)))^{\oplus s} \to \mathscr{H}^{i}(F(\mathscr{E}(n+1)))\to 0\mathrm{.}$$
Since each $ \mathscr{H}^{i}(F(\mathscr{E}(n)))$ is coherent, this is enough to conclude that the sheaf
$$\bigoplus_{n>n_{0}}  \mathscr{H}^{i}(F(\mathscr{E}(n)))$$
is finitely generated for each $n_{0}$ as a $\Gamma_{*}(X,\Ol_{X})\otimes \Ol_{Y}$-module, where the $\Gamma_{*}(X,\Ol_{X})$-action comes from the action of $\Gamma_{*}(X,\Ol_{X})$ on $\oplus \mathscr{E}(n)$ which gives a corresponding action on $\oplus F(\mathscr{E}(n))$ and hence on $\oplus \mathscr{H}^{i}(F(\mathscr{E}(n)))$. By Lemma \ref{bla_exists} then, the sheaf $\bigoplus_{n>n_{0}}  \mathscr{H}^{i}(F(\mathscr{E}(n)))$ on $Y$ corresponds to a sheaf $\mathscr{Bla}^{i}_{\mathscr{E}}$ on $X\times Y$ such that the functorial map
$$ \mathscr{H}^{i}(F(\mathscr{E}(n)))\to p_{2*}(\mathscr{Bla}_{\mathscr{E}}^{i}\otimes p_{1}^{*}\Ol_{X}(n))$$
is an isomorphisms for $n$ sufficiently high. 

Now consider the functor
\begin{align*}B:\text{Vect}(X) &\to \text{Coh}(X\times Y) \\
\mathscr{E} &\mapsto \mathscr{Bla}^{i}_{\mathscr{E}}
\end{align*}
from the category of locally free sheaves of finite rank on $X$ to the category of coherent sheaves on $X\times Y$. The functor $B$ is additive and right exact. In fact, given two coherent sheaves $\mathscr{E}_{1}$ and $\mathscr{E}_{2}$, 
$$ \bigoplus_{n}\mathscr{H}^{i}(F((\mathscr{E}_{1}+\mathscr{E}_{2})(n)))=\bigoplus_{n}  \mathscr{H}^{i}(F(\mathscr{E}_{1}(n)))  \oplus  \bigoplus_{n}  \mathscr{H}^{i}(F(\mathscr{E}_{2}(n)))$$
hence the functor is additive. Moreover, given a short exact sequence of finite rank locally free sheaves $0\to\mathscr{E}_{1}\to\mathscr{E}_{2}\to\mathscr{E}_{3}\to 0$, we get a triangle $F(\mathscr{E}_{1})\to F(\mathscr{E}_{2})\to F(\mathscr{E}_{3})$ hence for $n\gg 0$ we have (by induction hypothesis)
$$\xymatrix{ &\mathscr{H}^{i+1}(F(\mathscr{E}_{1}(n)) \ar[r]\ar[d] & \mathscr{H}^{i+1}(F(\mathscr{E}_{2}(n))) \ar[r]\ar[d] &\mathscr{H}^{i+1}(F(\mathscr{E}_{3}(n)))\ar[d] \\
0 \ar[r]& p_{2*}(\mathscr{Bla}^{i+1}\otimes p_{1}^{*}\mathscr{E}_{1}(n)) \ar[r] & p_{2*}(\mathscr{Bla}^{i+1}\otimes p_{1}^{*}\mathscr{E}_{2}(n)) \ar[r] & p_{2*}(\mathscr{Bla}^{i+1}\otimes p_{1}^{*}\mathscr{E}_{3}(n))\ar[r] & 0
}$$
and for $n$ sufficiently high, all of the vertical maps are isomorphisms hence the top sequence is also exact for $n$ high, say $n>n_{0}$. 

Hence moving down to the $i^{th}$ cohomology sheaves we get
$$\bigoplus_{n>n_{0}} \mathscr{H}^{i}(F(\mathscr{E}_{1}(n)))\to \bigoplus_{n>n_{0}} \mathscr{H}^{i}(F(\mathscr{E}_{2}(n)))\to  \bigoplus_{n>n_{0}}\mathscr{H}^{i}(F(\mathscr{E}_{3}(n)))\to 0$$
and so (by the equivalence of categories) get
$$\mathscr{Bla}^{i}_{\mathscr{E}_{1}}\to \mathscr{Bla}^{i}_{\mathscr{E}_{2}}\to \mathscr{Bla}^{i}_{\mathscr{E}_{3}}\to 0$$
hence the functor is right exact on the full subcategory of locally free sheaves of finite rank.

Moreover, for every $n$, for $m \gg0$ (depending on $n$) we have
$$\mathscr{H}^{i}(F(\mathscr{E}(n)(m)))= p_{2*}(\mathscr{Bla}^{i}_{\mathscr{E}(n)}\otimes p_{1}^{*}\Ol_{X}(m))$$
but also
\begin{align*} 
\mathscr{H}^{i}(F(\mathscr{E}(n)(m))) &=\mathscr{H}^{i}(F(\mathscr{E}(n+m))) \\
&=p_{2*}(\mathscr{Bla}^{i}_{\mathscr{E}}\otimes p_{1}^{*}\Ol_{X}(n+m)) \\
&=p_{2*}((\mathscr{Bla}^{i}_{\mathscr{E}}\otimes p_{1}^{*}\Ol_{X}(n))\otimes p_{1}^{*}\Ol_{X}(m))\mathrm{,}
\end{align*}
hence it follows from the equivalence of categories that 
$$\mathscr{Bla}^{i}_{\mathscr{E}(n)}=\mathscr{Bla}^{i}_{\mathscr{E}}\otimes \Ol_{X}(n)\mathrm{.}$$

Now let $\mathscr{E}$ be a locally free sheaf of finite rank on $X$. Then there exists a sequence 
$$\oplus \Ol_{X}(b_{j})\to \oplus \Ol_{X}(a_{k})\to \mathscr{E}\to 0\mathrm{,}$$
therefore since the functor $B$ is exact we get
$$\mathscr{Bla}^{i}_{ \oplus \Ol_{X}(b_{j})} \to \mathscr{Bla}^{i}_{ \oplus \Ol_{X}(a_{k})} \to \mathscr{Bla}^{i}_{\mathscr{E}}\to 0\mathrm{.}$$
Since $B$ is additive and compatible with twists, we can write
$$\oplus \mathscr{Bla}^{i}_{\Ol_{X}}\otimes p_{1}^{*}\Ol_{X}(b_{j}) \to \oplus \mathscr{Bla}^{i}_{\Ol_{X}}\otimes p_{1}^{*}\Ol_{X}(a_{k}) \to \mathscr{Bla}^{i}_{\mathscr{E}}\to 0$$
hence
$$\mathscr{Bla}^{i}_{\mathscr{E}}=\mathscr{Bla}^{i}_{\Ol_{X}}\otimes p_{1}^{*}\mathscr{E}$$
and the theorem follows by taking $\mathscr{Bla}^{i}=\mathscr{Bla}^{i}_{\Ol_{X}}$. Since there is a finite number of steps in the induction, we can find an $n_{0}$ such that for $n > n_{0}$ the maps
$$ \mathscr{H}^{i}(F(\mathscr{E}(n)))\to p_{2*}(\mathscr{Bla}_{\mathscr{E}}^{i}\otimes p_{1}^{*}\Ol_{X}(n))$$ 
are isomorphisms for all $i$.

In the case where $F\cong \Phi_{E}$, a Fourier-Mukai transform with kernel $E$, we have the following:
\begin{align*}
 \mathscr{H}^{i}(\Phi_{E}(\mathscr{O}_{X}(n))) &= \mathscr{H}^{i} (Rp_{2*}(E\stackrel{L}{\otimes}Lp_{1}^{*} \Ol_{X}(n)))	\cong \\
&\cong \mathscr{H}^{i} (p_{2*}(E \otimes p_{1}^{*} \Ol_{X}(n))) \cong \\
&\cong p_{2*}(\mathscr{H}^{i} (E \otimes p_{1}^{*} \Ol_{X}(n)))\cong \\
&\cong p_{2*}(\mathscr{H}^{i} (E) \otimes p_{1}^{*} \Ol_{X}(n))
\end{align*}
where for the second equality we used the fact that, for $n\gg 0$, $p_{1}^{*}\Ol_{X}$ is very ample with respect to $X\times Y\to Y$. Therefore, by Lemma \ref{bla_exists} the sheaf
$$\bigoplus_{n\geq n_{0}}\mathscr{H}^{i}(\Phi_{E}(\mathscr{O}_{X}(n)))  \cong \bigoplus_{n\geq n_{0}}p_{2*}(\mathscr{H}^{i} (E) \otimes p_{1}^{*} \Ol_{X}(n))$$
corresponds by the equivalence of categories to the sheaf $\mathscr{H}^{i}(E)$ on $X\times Y$.
\end{proof}


%

%

While Theorem \ref{all blas} gives a map $ \mathscr{H}^{i}(F(\mathscr{E}(n)))\to p_{2*}(\mathscr{Bla}^{i} \otimes p_{1}^{*}\mathscr{E}(n))$ for all vector bundles $\mathscr{E}$ and all $n\in \mathbb{Z}$, in general it is only an isomorphism for $n$ sufficiently large. In the case of the first $M$ such that $\mathscr{H}^{M}(F(\mathscr{E}))$ is nonzero for some locally free sheaf of finite rank $\mathscr{E}$ we can actually say more:

\begin{proposition}\label{bla_for_free}
In the situation of Theorem \ref{all blas}, assume $F(\mathscr{E})\in D^{[M,N]}_{\mathrm{Coh}}(Y)$ for all locally free sheaves $\mathscr{E}$ of finite rank on $X$. Then the maps
$$ \mathscr{H}^{M}(F(\mathscr{E}))\to p_{2*}(\mathscr{Bla}^{M} \otimes p_{1}^{*}\mathscr{E})$$
are isomorphisms for all locally free sheaves $\mathscr{E}$ of finite rank.
\end{proposition}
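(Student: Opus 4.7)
The plan is to exploit the left-exactness of both functors $\mathscr{E} \mapsto \mathscr{H}^M(F(\mathscr{E}))$ and $\mathscr{E} \mapsto p_{2*}(\mathscr{Bla}^M \otimes p_1^*\mathscr{E})$ on the category of locally free sheaves of finite rank, together with the isomorphism already provided by Theorem \ref{all blas} on $\Ol_X(n)$ when $n$ is sufficiently large. The bridge between these two facts is that every locally free sheaf on $X$ embeds into a sum of such line bundles with locally free cokernel.

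For left-exactness on top, a short exact sequence $0 \to \mathscr{E}_1 \to \mathscr{E}_2 \to \mathscr{E}_3 \to 0$ of locally free sheaves yields a triangle under $F$, whose associated long exact sequence starts from $\mathscr{H}^{M-1}(F(\mathscr{E}_3))=0$ by the standing hypothesis $F(\mathscr{E})\in D^{[M,N]}_{\mathrm{Coh}}(Y)$. Left-exactness on the bottom is automatic: the sheaves $p_1^*\mathscr{E}_i$ are locally free hence flat, so tensoring with $\mathscr{Bla}^M$ preserves short-exactness, and then $p_{2*}$ is left exact.

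For the embedding, if $\mathscr{E}$ is locally free, so is $\mathscr{E}^\vee$, and for $n \gg 0$ the sheaf $\mathscr{E}^\vee(n)$ is globally generated, giving a surjection $\Ol_X^{\oplus r} \to \mathscr{E}^\vee(n) \to 0$ with locally free kernel. Twisting by $-n$ and dualizing produces a short exact sequence
$$0 \to \mathscr{E} \to \Ol_X(n)^{\oplus r} \to \mathscr{E}_1 \to 0$$
with $\mathscr{E}_1$ again locally free. Taking $n$ large enough that the map from Theorem \ref{all blas} for $\Ol_X(n)^{\oplus r}$ is already an isomorphism, the commutative diagram with left-exact rows
$$\xymatrix@C=3mm{
0 \ar[r] & \mathscr{H}^M(F(\mathscr{E})) \ar[r] \ar[d]^{\alpha_{\mathscr{E}}} & \mathscr{H}^M(F(\Ol_X(n)^{\oplus r})) \ar[d]^{\wr} \ar[r] & \mathscr{H}^M(F(\mathscr{E}_1)) \ar[d]^{\alpha_{\mathscr{E}_1}} \\
0 \ar[r] & p_{2*}(\mathscr{Bla}^M \otimes p_1^*\mathscr{E}) \ar[r] & p_{2*}(\mathscr{Bla}^M \otimes p_1^*\Ol_X(n)^{\oplus r}) \ar[r] & p_{2*}(\mathscr{Bla}^M \otimes p_1^*\mathscr{E}_1)
}$$
immediately yields injectivity of $\alpha_{\mathscr{E}}$ for every locally free $\mathscr{E}$: the middle vertical arrow is an isomorphism and the top-left horizontal arrow is injective, so the composition---and hence $\alpha_{\mathscr{E}}$ itself---is injective. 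Applying this injectivity conclusion to $\mathscr{E}_1$, the rightmost vertical arrow becomes injective as well, and a standard four-lemma diagram chase then delivers surjectivity of $\alpha_{\mathscr{E}}$.

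The main obstacle is one of direction: the most natural resolutions of a locally free sheaf involve $\Ol_X(-n)$ with $n \gg 0$, where Theorem \ref{all blas} gives no information. The duality trick of embedding into \emph{positive} twists instead is what makes the argument work, and it crucially uses that $X$ is smooth projective so that duals of locally free sheaves of finite rank remain locally free and fit in short exact sequences with locally free terms.
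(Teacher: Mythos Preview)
Your proof is correct and follows essentially the same two-step strategy as the paper: first establish injectivity of $\alpha_{\mathscr{E}}$ for all locally free $\mathscr{E}$ via an embedding into high twists, then feed that injectivity back into the right-hand column and apply a five-lemma-type argument. The only difference is cosmetic: the paper produces the embedding by tensoring the Koszul-type sequence $0\to\Ol_X\xrightarrow{(s_1^m,\ldots,s_{d+1}^m)}\Ol_X(m)^{d+1}\to K_m\to 0$ with $\mathscr{E}$, whereas you obtain it by dualizing a presentation of $\mathscr{E}^\vee(n)$; both yield a short exact sequence $0\to\mathscr{E}\to\text{(high twists)}\to\text{(locally free)}\to 0$, and the rest of the argument is identical.
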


As we mentioned in the proof of Theorem \ref{all blas}, the assumption that $F(\mathscr{E})\in D^{[M,N]}_{\mathrm{Coh}}(Y)$ for all locally free sheaves $\mathscr{E}$ of finite rank on $X$ isn't actually restrictive because of \cite[Lemma 2.4]{orlov}.

\begin{proof}
Assume we have an immersion $X\hookrightarrow 	\pr_{k}^{d}$. Choose sections $s_{1},\ldots,s_{d+1}$ of $\Ol_{X}(1)$ such that the corresponding hyperplanes have empty intersection. Then for any $m\in \N$ we have short exact sequence
$$0\to \Ol_{X}\xrightarrow{(s_{1}^{m},\ldots, s_{d+1}^{m})} \Ol_{X}(m)^{d+1}\to K_{m}\to 0\mathrm{,}$$
where $K_{m}$ is a locally free sheaf on $X$. 

Let $\mathscr{E}$ be any coherent locally free sheaf. Then by tensoring the above short exact sequence with $\mathscr{E}$ we get
$$0\to \mathscr{E}\to \mathscr{E}(m)^{\oplus (d+1)} \to K_{m}\otimes \mathscr{E}\to 0$$
and so
$$\xymatrix{ 0\ar[r]&\mathscr{H}^{M}(F(\mathscr{E})) \ar[r]\ar[d] & \mathscr{H}^{M}(F(\mathscr{E}(m)^{d+1})) \ar[r]\ar[d] &\mathscr{H}^{M}(F(K_{m}\otimes \mathscr{E}))\ar[d] \\
0 \ar[r]& p_{2*}(\mathscr{Bla}^{M}\otimes p_{1}^{*}\mathscr{E}) \ar[r] & p_{2*}(\mathscr{Bla}^{M}\otimes p_{1}^{*}\mathscr{E}(m)^{d+1}) \ar[r] & p_{2*}(\mathscr{Bla}^{M}\otimes p_{1}^{*}(K_{m}\otimes\mathscr{E}))\mathrm{.}
}$$

Let $m$ be high enough so that the center map is an isomorphism (this is possible by Theorem \ref{all blas}). Then the map on the left must be injective. Thus we showed: for every locally free sheaf $\mathscr{E}$ of finite rank, the map $ \mathscr{H}^{M}(F(\mathscr{E}))\to p_{2*}(\mathscr{Bla}^{M} \otimes p_{1}^{*}\mathscr{E})$ is injective. 

Now let us go back to the diagram above. By what we just showed, the map on the right $ \mathscr{H}^{M}(F(K_{m}\otimes \mathscr{E}))\to p_{2*}(\mathscr{Bla}^{M} \otimes p_{1}^{*}(K_{m}\otimes\mathscr{E}))$ is injective. Hence we have 
$$\xymatrix{ 0\ar[r]&\mathscr{H}^{M}(F(\mathscr{E})) \ar[r]\ar@{^{(}->}[d] & \mathscr{H}^{M}(F(\mathscr{E}(m)^{d+1})) \ar[r]\ar[d]^{\cong} &\mathscr{H}^{M}(F(K_{m}\otimes \mathscr{E}))\ar@{^{(}->}[d] \\
0 \ar[r]& p_{2*}(\mathscr{Bla}^{M}\otimes p_{1}^{*}\mathscr{E}) \ar[r] & p_{2*}(\mathscr{Bla}^{M}\otimes p_{1}^{*}\mathscr{E}(m)^{d+1}) \ar[r] & p_{2*}(\mathscr{Bla}^{M}\otimes p_{1}^{*}(K_{m}\otimes\mathscr{E}))
}$$
then by the 5 Lemma the left arrow is an isomorphism, i.e.
$$ \mathscr{H}^{M}(F(\mathscr{E}))\xrightarrow{\cong} p_{2*}(\mathscr{Bla}^{M} \otimes p_{1}^{*}\mathscr{E})\mathrm{.}$$
\vskip-.5cm
\end{proof}

This proposition in particular implies that if $F(\mathscr{E})\in D^{[M,N]}_{\mathrm{Coh}}(Y)$ for all locally free sheaves $\mathscr{E}$ of finite rank on $X$, and there is at least one $\mathscr{E}$ such that $\mathscr{H}^{M}(F(\mathscr{E}))\neq 0$, then necessarily $\mathscr{Bla}^{M}\neq 0$.

Similarly to Proposition \ref{bla_for_free}, we also have a stronger result than the one in Theorem \ref{all blas} for the largest $N'$ such that  $\mathscr{Bla}^{i}\neq 0$. In this case, the map $\mathscr{H}^{N'}(F(\mathscr{E}(n))\to p_{2*}(\mathscr{Bla}^{N'}\otimes p_{1}^{*}\mathscr{E}(n))$ can be constructed for all coherent sheaves on $X$ instead of just the locally free ones:

\begin{proposition}\label{lastbla}
In the situation of Theorem \ref{all blas}, let $N'$ be the largest $i$ such that $\mathscr{Bla}^{i}\neq 0$. Then for all $n\in \Z$, for any coherent sheaf $\mathscr{F}$ we have a map 
$$\mathscr{H}^{N'}(F(\mathscr{F}(n))\to p_{2*}(\mathscr{Bla}^{N'}\otimes p_{1}^{*}\mathscr{F}(n))$$
which is an isomorphism for $n$ sufficiently high.
\end{proposition}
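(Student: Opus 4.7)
My strategy is induction on the length $k$ of a finite locally free resolution of $\mathscr{F}$, which exists because $X$ is smooth projective. The base case $k=0$, i.e.\ $\mathscr{F}$ locally free of finite rank, is exactly Theorem \ref{all blas}. For the inductive step I will prove a strengthened statement that, in addition to the conclusion of the Proposition for $\mathscr{F}$, also asserts the vanishing $\mathscr{H}^{i}(F(\mathscr{F}(n)))=0$ for all $i>N'$ and $n$ sufficiently large; the two claims must be proved together.

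For the inductive step, choose a short exact sequence
$$0\to K\to \mathscr{E}\to \mathscr{F}\to 0$$
with $\mathscr{E}$ locally free of finite rank and $K$ coherent with locally free resolution of length at most $k-1$. Twisting by $\Ol_{X}(n)$ and applying $F$ produces a distinguished triangle $F(K(n))\to F(\mathscr{E}(n))\to F(\mathscr{F}(n))$, and hence a long exact sequence of cohomology sheaves. For $i>N'$ and $n\gg 0$, Theorem \ref{all blas} gives $\mathscr{H}^{i}(F(\mathscr{E}(n)))=0$ (because $\mathscr{Bla}^{i}=0$) and the inductive hypothesis gives $\mathscr{H}^{i+1}(F(K(n)))=0$; exactness then forces $\mathscr{H}^{i}(F(\mathscr{F}(n)))=0$, establishing the auxiliary vanishing for $\mathscr{F}$. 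Applied at $i=N'+1$, this truncates the long exact sequence in degree $N'$ into a right-exact sequence
$$\mathscr{H}^{N'}(F(K(n)))\to \mathscr{H}^{N'}(F(\mathscr{E}(n)))\to \mathscr{H}^{N'}(F(\mathscr{F}(n)))\to 0\quad(n\gg 0).$$

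Summing over such $n$ and invoking the finite generation of $\bigoplus \mathscr{H}^{N'}(F(\mathscr{E}(n)))$ established inside the proof of Theorem \ref{all blas}, the surjection on the right shows that $\bigoplus_{n\geq n_{0}}\mathscr{H}^{N'}(F(\mathscr{F}(n)))$ is finitely generated as a $\Gamma_{*}(\Ol_{X})\otimes\Ol_{Y}$-module. Lemma \ref{bla_exists} then produces a coherent sheaf $\mathscr{Bla}^{N'}_{\mathscr{F}}$ on $X\times Y$ together with functorial maps $\mathscr{H}^{N'}(F(\mathscr{F}(n)))\to p_{2*}(\mathscr{Bla}^{N'}_{\mathscr{F}}\otimes p_{1}^{*}\Ol_{X}(n))$ defined for every $n\in\Z$ and isomorphisms for $n$ large. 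To identify $\mathscr{Bla}^{N'}_{\mathscr{F}}$ with $\mathscr{Bla}^{N'}\otimes p_{1}^{*}\mathscr{F}$, the equivalence of categories in Lemma \ref{bla_exists} transports the right-exact sequence above to a right-exact sequence
$$\mathscr{Bla}^{N'}_{K}\to \mathscr{Bla}^{N'}_{\mathscr{E}}\to \mathscr{Bla}^{N'}_{\mathscr{F}}\to 0$$
on $X\times Y$; the first two terms coincide with $\mathscr{Bla}^{N'}\otimes p_{1}^{*}K$ and $\mathscr{Bla}^{N'}\otimes p_{1}^{*}\mathscr{E}$ by the inductive hypothesis and Theorem \ref{all blas}, and matching cokernels against the parallel right-exact sequence obtained by applying $\mathscr{Bla}^{N'}\otimes p_{1}^{*}(-)$ to the original short exact sequence yields $\mathscr{Bla}^{N'}_{\mathscr{F}}\cong\mathscr{Bla}^{N'}\otimes p_{1}^{*}\mathscr{F}$.

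The principal obstacle I anticipate is recognizing that the inductive hypothesis has to be \emph{strengthened} to include the vanishing in degrees $>N'$ for arbitrary coherent sheaves: without this extra control on $\mathscr{H}^{N'+1}(F(K(n)))$ one cannot truncate the long exact sequence into the right-exact sequence at degree $N'$, and the identification of the sheaf $\mathscr{Bla}^{N'}_{\mathscr{F}}$ with $\mathscr{Bla}^{N'}\otimes p_{1}^{*}\mathscr{F}$ would break down.
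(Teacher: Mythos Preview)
Your proof is correct and follows the same overall strategy as the paper: establish vanishing of $\mathscr{H}^{i}(F(\mathscr{F}(n)))$ above degree $N'$ for $n\gg 0$, use this to obtain a right-exact sequence in degree $N'$, invoke Lemma~\ref{bla_exists} to produce $\mathscr{Bla}^{N'}_{\mathscr{F}}$, and then identify it with $\mathscr{Bla}^{N'}\otimes p_{1}^{*}\mathscr{F}$ by comparing cokernels.

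There are two tactical differences worth noting. First, for finite generation you simply observe that $\bigoplus_{n}\mathscr{H}^{N'}(F(\mathscr{F}(n)))$ is a quotient (in high degree) of the finitely generated module $\bigoplus_{n}\mathscr{H}^{N'}(F(\mathscr{E}(n)))$; the paper instead reruns the $\mathscr{F}^{\oplus s}(n)\twoheadrightarrow \mathscr{F}(n+1)$ argument from Theorem~\ref{all blas}, which is equivalent but longer. Second, and more substantively, you set up an explicit induction on the length of a locally free resolution, strengthening the hypothesis to include vanishing in all degrees $>N'$. The paper instead asserts a two-term resolution $0\to\mathscr{E}_{1}\to\mathscr{E}_{2}\to\mathscr{F}\to 0$ with \emph{both} $\mathscr{E}_{i}$ locally free; this is of course not available for arbitrary coherent $\mathscr{F}$ when $\dim X>1$, so your inductive formulation is in fact the more careful one. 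Your observation that the vanishing hypothesis must be carried along in the induction (to control $\mathscr{H}^{N'+1}(F(K(n)))$) is exactly the point that makes the argument go through.
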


\begin{proof}
First of all, notice that for any coherent sheaf $\mathscr{F}$ on $X$ we have $\mathscr{H}^{N'+1}(\mathscr{F}(n))=0$ for $n\gg 0$. This is because we can always find a short exact sequence $0\to \mathscr{E}_{1}\to \mathscr{E}_{2} \to\mathscr{F}\to 0$ with $\mathscr{E}_{i}$ locally free of finite rank and so, by Theorem \ref{all blas}, $\mathscr{H}^{N'+1}(F(\mathscr{E}_{i}(n)))=p_{2*}(\mathscr{B}^{N'+1}\otimes p_{1}^{*}(\mathscr{E}_{i}))=0$ for $n\gg 0$ since $\mathscr{B}^{N'+1}=0$.

Now consider a coherent sheaf $\mathscr{F}$ on $X$. As in Theorem \ref{all blas} we have a short exact sequence $0\to K(n)\to \mathscr{F}^{\oplus s}(n) \to \mathscr{F}(n+1)\to 0$ ($K$ now is no longer locally free) hence for $n\gg 0$ we get 
$$\mathscr{H}^{N'}(F(\mathscr{F}(n)))^{\oplus s} \to \mathscr{H}^{N'}(F(\mathscr{F}(n+1)))\to 0$$
since $\mathscr{H}^{N'+1}(F(K(n)))=0$. As in the proof of Theorem \ref{all blas} this gives us a $\mathscr{B}_{\mathscr{F}}^{N'}$ and a map
$$\mathscr{H}^{N'}(F(\mathscr{F}(n)))\to p_{2*}(\mathscr{B}_{\mathscr{F}}^{N'}\otimes p_{1}^{*}(\mathscr{O}_{X}(n)))$$
which is an isomorphism for $n$ big.

Now let us go back to a short exact sequence $0\to \mathscr{E}_{1}\to \mathscr{E}_{2} \to\mathscr{F}\to 0$ with $\mathscr{E}_{i}$ locally free of finite rank. Since $\mathscr{E}_{1}$ is locally free, $\mathscr{H}^{N'+1}(F(\mathscr{E}_{1}(n)))=0$ for $n\gg 0$ and the exact sequence
$$\mathscr{H}^{N'}(F(\mathscr{E}_{1}(n)))\to \mathscr{H}^{N'}(F(\mathscr{E}_{2}(n))) \to \mathscr{H}^{N'}(F(\mathscr{F}(n))) \to 0$$
gives us a an exact sequence as in the proof of Theorem \ref{all blas} 
$$\mathscr{Bla}^{N'}_{\mathscr{E}_{1}}\to \mathscr{Bla}^{N'}_{\mathscr{E}_{2}}\to \mathscr{Bla}^{N'}_{\mathscr{F}}\to 0\mathrm{,}$$
therefore we can conclude that $\mathscr{B}_{\mathscr{F}}^{N'}=\mathscr{B}^{N'}\otimes p_{1}^{*}(\mathscr{F})$ for any coherent sheaf $\mathscr{F}$.
\end{proof}

\section{A special case}

In this section we will give an example of a class of exact functors $F:D^{b}_{\mathrm{Coh}}(X)\to D^{b}_{\mathrm{Coh}}(Y)$ for which we can always find an object $E\in D^{b}_{\mathrm{Coh}}(X\times Y)$ and an equivalence $F\cong\Phi_{E}$. The sheaves $\mathscr{Bla}^{i}$ will be the ones defined as in Theorem \ref{all blas}.

In what follows we will take $\text{dim}(X)$ to be equal to one, all but one of the $\mathscr{Bla}^{i}$'s to be equal to zero, and the nonzero one to be supported at a finite number of points.

Before we proceed to find an isomorphism of functors, we need to show that we can obtain an isomorphism on the cohomology sheaves. The following proposition gives a description of the cohomology sheaves of $F(\mathscr{F})$ for any $\mathscr{F}$ coherent, without needing to twist by some high $n$ as we did in Theorem \ref{all blas}.

\begin{proposition}\label{oncohomology}
Let $X$, $Y$ be smooth projective varieties over an algebraically closed field, with $X$ of dimension one, let $F:D^{b}_{\mathrm{Coh}}(X)\to D^{b}_{\mathrm{Coh}}(Y)$ an exact functor, and assume that the sheaves $\mathscr{Bla}^{i}$ defined as in Theorem \ref{all blas} are zero for $i \neq M$. Assume also that $\mathscr{Bla}^{M}$ is a coherent sheaf supported at finitely many points of $X\times Y$. 

Then for any coherent sheaf $\F$ on $X$ we have $\mathscr{H}^{i}(F(\F))=0$ for $i\neq M, M-1$ and for any locally free sheaf $\mathscr{E}$ of finite rank we have $\mathscr{H}^{i}(F(\mathscr{E}))=0$ for $i\neq M$. 

Moreover, for each coherent sheaf $\F$ on $X$ there is a functorial isomorphism 
$$ \mathscr{H}^{M}(F(\F))\xrightarrow{\cong} p_{2*}(\mathscr{Bla}^{M} \otimes p_{1}^{*}\F)\mathrm{.}$$
\end{proposition}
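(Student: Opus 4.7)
The strategy has three steps: (i) prove the vanishing $\mathscr{H}^{i}(F(\mathscr{E}))=0$ for $i\neq M$ and $\mathscr{E}$ locally free of finite rank; (ii) propagate to coherent sheaves via a two-term locally free resolution, which exists because $X$ is a smooth curve; (iii) identify $\mathscr{H}^{M}(F(\mathscr{F}))$ with $p_{2*}(\mathscr{Bla}^{M}\otimes p_{1}^{*}\mathscr{F})$. The structural input used throughout is that $\mathscr{Bla}^{M}$ has $0$-dimensional support, so $p_{2}$ is finite on it and $R^{i}p_{2*}(\mathscr{Bla}^{M}\otimes p_{1}^{*}\mathscr{G})=0$ for every $i>0$ and every coherent $\mathscr{G}$ on $X$. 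Consequently $p_{2*}(\mathscr{Bla}^{M}\otimes p_{1}^{*}(-))\colon \mathrm{Coh}(X)\to \mathrm{Coh}(Y)$ is right exact, and it sends short exact sequences of locally free sheaves to short exact sequences.

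For step (i) I apply $F$ to the short exact sequence
$$0 \to \mathscr{E} \to \mathscr{E}(m)^{\oplus (d+1)} \to K_{m}\otimes \mathscr{E} \to 0$$
from the proof of Proposition~\ref{bla_for_free}, with $K_{m}$ locally free. For $m$ large, Theorem~\ref{all blas} gives that $F(\mathscr{E}(m)^{\oplus(d+1)})$ is concentrated in cohomological degree $M$. For the upper bound $i>M$ I proceed by upward induction. The base case $i=M+1$ identifies $\mathscr{H}^{M+1}(F(\mathscr{E}))$ with the cokernel of $\mathscr{H}^{M}(F(\mathscr{E}(m)^{\oplus(d+1)}))\to \mathscr{H}^{M}(F(K_{m}\otimes \mathscr{E}))$; Proposition~\ref{bla_for_free} and the functoriality of its isomorphism replace this by the cokernel of $p_{2*}(\mathscr{Bla}^{M}\otimes p_{1}^{*}\mathscr{E}(m)^{\oplus(d+1)})\to p_{2*}(\mathscr{Bla}^{M}\otimes p_{1}^{*}(K_{m}\otimes \mathscr{E}))$, which vanishes by the structural input. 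For $i>M+1$ the long exact sequence collapses to an isomorphism $\mathscr{H}^{i-1}(F(K_{m}\otimes \mathscr{E}))\cong \mathscr{H}^{i}(F(\mathscr{E}))$, and the induction hypothesis applied to the locally free sheaf $K_{m}\otimes \mathscr{E}$ completes the argument. For the lower bound, I set $m_{0}=\min\{i:\mathscr{H}^{i}(F(\mathscr{E}'))\neq 0\text{ for some locally free }\mathscr{E}'\}$ and suppose $m_{0}<M$; the long exact sequence of the same short exact sequence realizes $\mathscr{H}^{m_{0}}(F(\mathscr{E}'))$ as a quotient of $\mathscr{H}^{m_{0}-1}(F(K_{m}\otimes \mathscr{E}'))$, which vanishes by the minimality of $m_{0}$---contradiction.

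For step (ii), since $X$ is a smooth curve every coherent sheaf $\mathscr{F}$ fits into a short exact sequence $0\to \mathscr{E}_{1}\to \mathscr{E}_{2}\to \mathscr{F}\to 0$ with $\mathscr{E}_{j}$ locally free of finite rank (surject onto $\mathscr{F}$ from a sufficiently negative twist of $\mathscr{O}_{X}$; the kernel is torsion-free, hence locally free). The long exact sequence of $F$ combined with step (i) immediately forces $\mathscr{H}^{i}(F(\mathscr{F}))=0$ for every $i\neq M,M-1$. For step (iii), the same long exact sequence at $i=M$ yields the right exact sequence
$$\mathscr{H}^{M}(F(\mathscr{E}_{1}))\to \mathscr{H}^{M}(F(\mathscr{E}_{2}))\to \mathscr{H}^{M}(F(\mathscr{F}))\to 0$$
(using $\mathscr{H}^{M+1}(F(\mathscr{E}_{1}))=0$ from step (i)), and I fit it into a commutative ladder with the right exact sequence $p_{2*}(\mathscr{Bla}^{M}\otimes p_{1}^{*}\mathscr{E}_{1})\to p_{2*}(\mathscr{Bla}^{M}\otimes p_{1}^{*}\mathscr{E}_{2})\to p_{2*}(\mathscr{Bla}^{M}\otimes p_{1}^{*}\mathscr{F})\to 0$ coming from the structural input; the outer vertical arrows are the isomorphisms of Proposition~\ref{bla_for_free}, and the right vertical is the functorial map of Proposition~\ref{lastbla} at $n=0$. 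The five-lemma forces the right vertical to be an isomorphism, and functoriality in $\mathscr{F}$ is built into the construction of that map.

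The main obstacle is the base of the upper-bound induction in step (i): one has to match the connecting map in the $F$-long exact sequence at degree $M$ with the map induced on $p_{2*}(\mathscr{Bla}^{M}\otimes p_{1}^{*}(-))$ by functoriality of Proposition~\ref{bla_for_free}, and it is precisely here that the $0$-dimensional-support hypothesis is applied to guarantee surjectivity of the relevant pushforward map. With this base case in hand, the lower bound and the remaining steps are formal consequences.
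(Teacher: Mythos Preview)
Your argument is correct in substance and the route you take is genuinely different from the paper's. The paper first handles torsion sheaves by the twisting trick (torsion sheaves are invariant under twisting, so choose $n\gg 0$ in a resolution by locally free sheaves), then bounds the cohomology of $F(\mathscr{E})$ for locally free $\mathscr{E}$ via descending induction on the twist using the torsion quotient $0\to\mathscr{E}(n-1)\to\mathscr{E}(n)\to T\to 0$; the vanishing of $\mathscr{H}^{M+1}$ is deferred to the end and proved using the already-established isomorphism together with the flasqueness of $\mathscr{Bla}^{M}$-twisted sheaves. Finally the paper decomposes a general coherent sheaf as torsion $\oplus$ locally free. By contrast you work entirely through the Koszul-type sequence $0\to\mathscr{E}\to\mathscr{E}(m)^{\oplus(d+1)}\to K_{m}\otimes\mathscr{E}\to 0$, doing induction on cohomological degree rather than on the twist, and then pass to arbitrary coherent $\mathscr{F}$ via a two-term locally free resolution (available because $\dim X=1$) and a cokernel comparison. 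Your approach avoids the separate treatment of torsion sheaves and is somewhat more streamlined; the paper's approach has the advantage that the isomorphism for torsion sheaves is obtained directly (it \emph{is} the large-$n$ isomorphism, since twisting is trivial on them).

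One point to tighten: your base case $i=M+1$ invokes Proposition~\ref{bla_for_free} for the general locally free sheaf $K_{m}\otimes\mathscr{E}$, but the hypothesis of that proposition is that $M$ is already a lower bound for $F$ on locally free sheaves. You prove this lower bound only afterwards. The fix is simply to swap the order: your lower-bound argument uses only Theorem~\ref{all blas} and the minimality of $m_{0}$, so it stands on its own; once it is in place, Proposition~\ref{bla_for_free} applies with the correct $M$ and your upper-bound induction goes through as written. The paper organizes the logic in exactly this order and flags the same dependency (``notice that the hypotheses of \ref{bla_for_free} are satisfied by the first part of this Proposition'').
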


\begin{proof}
Consider any torsion sheaf $Q$. Then we have a short exact sequence of coherent sheaves $0\to \mathscr{E}' \to \mathscr{E} \to Q \to 0$ with $\mathscr{E}, \mathscr{E}'$ locally free. Twist $\mathscr{E}$ and $\mathscr{E}'$ by $n\gg0$ so that $\mathscr{H}^{i}(F(\mathscr{E}'(n)))=\mathscr{H}^{i}(F(\mathscr{E}(n)))=0$ for $i\neq M$. Since $0\to \mathscr{E}'(n) \to \mathscr{E}(n) \to Q \to 0$ is still an exact sequence, from the long exact sequence on cohomology we can conclude that $\mathscr{H}^{i}(F(Q))=0$ for all $i\neq M, M-1$.

Now consider a locally free sheaf $\mathscr{E}$ of finite rank on $X$. Let $\bar{n}$ be large enough so that we know $\mathscr{H}^{i}(F(\mathscr{E}(\bar{n})))=0$ for all $i\neq M$. Then we have a short exact sequence $0\to \mathscr{E}(\bar{n}-1) \to \mathscr{E}(\bar{n}) \to T \to 0$ where $T$ is a torsion sheaf. A portion of the long exact sequence in cohomology gives
$$\mathscr{H}^{i-1}(F(T))\to \mathscr{H}^{i}(F(\mathscr{E}(\bar{n}-1))) \to \mathscr{H}^{i}(F(\mathscr{E}(\bar{n})))$$
and $\mathscr{H}^{i-1}(F(T))= \mathscr{H}^{i}(F(\mathscr{E}(\bar{n})))=0$ for $i\neq M, M+1$ hence $\mathscr{H}^{i}(F(\mathscr{E}(\bar{n}-1)))=0$ for $i\neq M,M+1$. By descending induction on $n$ we then obtain that $\mathscr{H}^{i}(F(\mathscr{E}(n)))=0$ for all $n$ and $i\neq M, M+1$. We will show at the end of the proof that $\mathscr{H}^{M+1}(F(\mathscr{E}))=0$.

By Proposition \ref{lastbla} we know that for any coherent sheaf $\mathscr{F}$ on $X$ we have a functorial map
$$ \mathscr{H}^{M}(F(\F))\rightarrow p_{2*}(\mathscr{Bla}^{M} \otimes p_{1}^{*}\mathscr{F})$$
which is an isomorphism by Proposition \ref{bla_for_free} if $\mathscr{F}$ is locally free (notice that the hypotheses of \ref{bla_for_free} are satisfied by the first part of this Proposition). Moreover we also know, again by Proposition \ref{lastbla}, that for any coherent sheaf $\mathscr{F}$ the map
$$ \mathscr{H}^{M}(F(\mathscr{F}(n)))\xrightarrow{\cong} p_{2*}(\mathscr{Bla}^{M} \otimes p_{1}^{*}\mathscr{F}(n))$$
is an isomorphism for $n$ sufficiently high. But when $\mathscr{F}$ is a sheaf supported at a point twisting doesn't affect the sheaf, so we get that
$$ \mathscr{H}^{M}(F(\mathscr{F}))\xrightarrow{\cong} p_{2*}(\mathscr{Bla}^{M} \otimes p_{1}^{*}\mathscr{F})$$
is also an isomorphism for torsion sheaves, and hence it is always an isomorphism since any coherent sheaf on $X$ is the direct sum of a locally free part and a torsion part. 

Now let us show that $\mathscr{H}^{M+1}(F(\mathscr{E}))=0$ for a locally free sheaf $\mathscr{E}$:  using again the short exact sequence $0\to \mathscr{E}(\bar{n}-1) \to \mathscr{E}(\bar{n}) \to T \to 0$, we obtain a diagram
$$\xymatrix@C=4mm{
 \mathscr{H}^{M}(F(\mathscr{E}(\bar{n}))) \ar[r]\ar[d]^{\cong} &  \mathscr{H}^{M}(F(T)) \ar[r]\ar[d]^{\cong} &  \mathscr{H}^{M+1}(F(\mathscr{E}(\bar{n}-1))) \ar[r]\ar[d] & \mathscr{H}^{M+1}(F(\mathscr{E}(\bar{n})))=0 \\
 p_{2*}(\mathscr{Bla}^{M} \otimes p_{1}^{*}\mathscr{E}(\bar{n})) \ar[r] &  p_{2*}(\mathscr{Bla}^{M} \otimes p_{1}^{*}T) \ar[r] & 0
}$$
where the bottom sequence is right exact because $p_{2*}$ is exact when applied to a sequence of flasque sheaves, which is the case here by the way we chose our 
$\mathscr{Bla}^{M}$. From the five lemma it follows that $\mathscr{H}^{M+1}(F(\mathscr{E}(\bar{n}-1)))=0$. So we can again proceed by descending induction on $n$.
\end{proof}


Thanks to Proposition \ref{oncohomology} we can now get an isomorphism of the $\delta$-functors obtained by first applying the two functors $F$ and $\Phi_{\mathscr{B}^{M}[-M]}$ and then taking their cohomology:

\begin{proposition}\label{isoforfree}
In the setting of Proposition \ref{oncohomology}, let $\Phi$ be the Fourier-Mukai transform associated to the complex given by the sheaf $\mathscr{Bla}^{M}$ placed in degree $M$, $\Phi=\Phi_{\mathscr{Bla}_{M}[-M]}$.

Then there is an isomorphism of $\delta$-functors
$$ \mathscr{H}^{i}(F(\cdot))\xrightarrow{\cong} \mathscr{H}^{i}(\Phi(\cdot))$$
on the  category of coherent sheaves on $X$, which gives an isomorphism of functors $F\to \Phi$ for the full subcategory of $D^{b}_{\mathrm{Coh}}(X)$ consisting of locally free sheaves placed in degree zero.
\end{proposition}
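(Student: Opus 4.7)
The plan is to identify the cohomology sheaves of $\Phi$ directly via a short Tor computation, match them up with those of $F$ in top degree using Proposition~\ref{oncohomology}, and extend to the lower cohomology by a five-lemma argument on short locally free resolutions. The final claim then follows because locally free inputs yield complexes concentrated in a single degree.

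First I would compute $\mathscr{H}^{i}(\Phi(\mathscr{F}))$ for a coherent sheaf $\mathscr{F}$ on $X$. Since $X$ is a smooth curve, $\mathscr{F}$ admits a short locally free resolution $0\to \mathscr{E}_{1}\to \mathscr{E}_{0}\to \mathscr{F}\to 0$; pulling back along the flat projection $p_{1}$ yields a flat resolution of $p_{1}^{*}\mathscr{F}$ of length one. Hence $\mathscr{Bla}^{M}[-M]\otimes^{L} p_{1}^{*}\mathscr{F}$ has cohomology concentrated in degrees $M-1$ and $M$, equal to $\mathrm{Tor}_{1}(\mathscr{Bla}^{M},p_{1}^{*}\mathscr{F})$ and $\mathscr{Bla}^{M}\otimes p_{1}^{*}\mathscr{F}$ respectively. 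Both are supported inside the finite set $\mathrm{Supp}(\mathscr{Bla}^{M})$, so $p_{2}$ restricts to a finite map there and $Rp_{2*}=p_{2*}$ with no higher direct images. This gives $\mathscr{H}^{M}(\Phi(\mathscr{F}))\cong p_{2*}(\mathscr{Bla}^{M}\otimes p_{1}^{*}\mathscr{F})$ and $\mathscr{H}^{M-1}(\Phi(\mathscr{F}))\cong p_{2*}\mathrm{Tor}_{1}(\mathscr{Bla}^{M},p_{1}^{*}\mathscr{F})$, with all other $\mathscr{H}^{i}(\Phi(\mathscr{F}))$ vanishing; for $\mathscr{E}$ locally free the Tor vanishes, matching exactly the vanishing proved for $F$ in Proposition~\ref{oncohomology}.

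Composing the functorial isomorphism of Proposition~\ref{oncohomology} with the identification above yields a natural isomorphism $\eta_{M}\colon \mathscr{H}^{M}(F(-))\xrightarrow{\sim}\mathscr{H}^{M}(\Phi(-))$ on $\mathrm{Coh}(X)$. To construct $\eta_{M-1}(\mathscr{F})$, take a resolution $0\to \mathscr{E}_{1}\to \mathscr{E}_{0}\to \mathscr{F}\to 0$: since both $F$ and $\Phi$ have cohomology concentrated in degrees $M-1, M$ on $\mathscr{F}$ and only in degree $M$ on the $\mathscr{E}_{i}$, the long exact sequences in cohomology reduce to four-term exact sequences, and $\eta_{M}$ applied to $\mathscr{E}_{0},\mathscr{E}_{1},\mathscr{F}$ produces, via the five lemma, an iso on the kernels. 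Independence of the resolution and functoriality in $\mathscr{F}$ follow from the standard comparison of locally free resolutions together with the already established naturality of $\eta_{M}$.

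For the last claim, on a locally free sheaf $\mathscr{E}$ both $F(\mathscr{E})$ and $\Phi(\mathscr{E})$ are concentrated in degree $M$, so the canonical truncations give quasi-isomorphisms $F(\mathscr{E})\simeq \mathscr{H}^{M}(F(\mathscr{E}))[-M]$ and likewise for $\Phi$; the isomorphism $\eta_{M}$ then lifts directly to a natural iso of functors $F\to \Phi$ on locally free sheaves in degree zero. The main obstacle is the $\delta$-functor compatibility of $\eta_{\bullet}$ with the connecting homomorphism of a general short exact sequence $0\to\mathscr{F}'\to\mathscr{F}\to\mathscr{F}''\to 0$: both $\delta$'s are realized through the same cohomology long exact sequences, and one reduces via a horseshoe-style resolution of the short exact sequence to the naturality of $\eta_{M}$, but organizing the diagram chase cleanly (so that the two connecting maps are seen to correspond) is the one step requiring real care; everything else is bookkeeping.
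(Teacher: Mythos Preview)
Your proposal is correct and follows essentially the same route as the paper: identify $\mathscr{H}^{M}(\Phi(\mathscr{F}))$ with $p_{2*}(\mathscr{Bla}^{M}\otimes p_{1}^{*}\mathscr{F})$ (the paper phrases this as ``pushforward is exact for flasque sheaves'' rather than your finite-support argument), invoke Proposition~\ref{oncohomology} for $\eta_{M}$, and then build $\eta_{M-1}$ from a length-one locally free resolution and the resulting four-term exact sequences, checking functoriality and resolution-independence by comparing resolutions. The one place where the paper is more explicit is the $\delta$-compatibility: rather than a horseshoe, it reduces an arbitrary short exact sequence $0\to B'\to B\to Q\to 0$ to one with $B',B$ locally free by mapping a locally free resolution of $Q$ onto the given sequence and chasing the resulting cube; your horseshoe sketch would also work here since $X$ is a curve, so this is a cosmetic difference rather than a gap.
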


\begin{proof}
The fact that there is a functorial isomorphism
$$ \mathscr{H}^{M}(F(\cdot))\xrightarrow{\cong} \mathscr{H}^{M}(\Phi(\cdot))$$
on the category of coherent sheaves on $X$ follows immediately from Proposition \ref{oncohomology} given that $\mathscr{H}^{M}(\Phi(\F))=p_{2*}(\mathscr{Bla}^{M} \otimes p_{1}^{*}\F)$ since pushforward is exact for flasque sheaves,. 

Moreover, for any locally free sheaf of finite rank $\mathscr{E}$, since the only nonzero cohomology sheaf of $F(\mathscr{E})$ is in degree $M$, 
\begin{align*}
F(\mathscr{E}) &= \mathscr{H}^{M}(F(\mathscr{E}))[-M]\xrightarrow{\cong} \mathscr{H}^{M}(\Phi(\mathscr{E}))[-M]= \\
&=p_{2*}(\mathscr{Bla}^{M} \otimes p_{1}^{*}\mathscr{E})[-M]=Rp_{2*}(\mathscr{Bla}^{M} [-M]\stackrel{L}{\otimes} Lp_{1}^{*}\mathscr{E})=\Phi(\mathscr{E})\mathrm{,}
\end{align*}
where the third equality follows again by Proposition \ref{oncohomology}
This gives the isomorphism of functors on the full subcategory of $D^{b}_{\mathrm{Coh}}(X)$ of locally free sheaves placed in degree zero.

Let us now construct the isomorphism
$$ \mathscr{H}^{M-1}(F(\cdot))\xrightarrow{\cong} \mathscr{H}^{M-1}(\Phi(\cdot))\mathrm{.}$$
Consider a coherent sheaf $Q$ on $X$ which is not locally free. Then there is a short exact sequence of coherent sheaves $0\to A'\to A \to Q\to 0$ where $A'$ and $A$ are locally free. We get a long exact sequence on cohomology
$$\xymatrix{
0\ar[r] &\mathscr{H}^{M-1}(F(Q))\ar[r]      &\mathscr{H}^{M}(F(A')) \ar[r]\ar[d] & \mathscr{H}^{M}(F(A)) \ar[d]  \\
0 \ar[r] &\mathscr{H}^{M-1}(\Phi(Q))\ar[r] & \mathscr{H}^{M}(\Phi(A')) \ar[r] & \mathscr{H}^{M}(\Phi(A)) 
}$$
so we get an isomorphism $\mathscr{H}^{M-1}(F(Q))\to \mathscr{H}^{M-1}(\Phi(Q))$. 

We still need to show that this map is functorial and that it does not depend on the choice of a short exact sequence. Consider a map $Q\to T$ of coherent sheaves. Then we can construct two short exact sequences
$$\xymatrix{
0\ar[r] & A'\ar[r]\ar[d] & A \ar[r]\ar[d] & Q\ar[r]\ar[d]&  0 \\
0\ar[r] & B' \ar[r] & B\ar[r] & T \ar[r] & 0
}$$
with $A, A', B$ and $B'$ locally free of finite rank. Then we get the following diagram on cohomology:
\psset{unit=1.5,dimen=middle}
\begin{pspicture}(0,3)
\pspolygon[dimen=middle,linestyle=none,fillstyle=solid,fillcolor= lightgray](1.9,0)(3.35,.85)(3.35,2.6)(1.9,1.7)
\pspolygon[dimen=middle,linestyle=none,fillstyle=solid,fillcolor= lightgray](5.05,0)(6.35,.85)(6.35,2.6)(5.05,1.7)
\pspolygon[dimen=middle,linestyle=none,fillstyle=solid,fillcolor= lightgray](7.95,0)(9.15,.85)(9.15,2.57)(7.95,1.74)
 \end{pspicture}

\vskip-5cm

\begin{equation}\label{delta functor}
\xymatrix@C=.1mm{
&0\ar[rr] &	& \mathscr{H}^{M-1}(\Phi(Q))\ar[rr]\ar '[d][dd] &		&\mathscr{H}^{M}(\Phi(A'))\ar[rr]\ar '[d][dd] &			& \mathscr{H}^{M}(\Phi(A))\ar[dd] \\
0\ar[rr] &	& \mathscr{H}^{M-1}(F(Q)) \ar[ur] \ar[rr]\ar[dd] &		& \mathscr{H}^{M}(\F(A'))\ar[ur]\ar[rr]\ar[dd] &		& \mathscr{H}^{M}(F(A))\ar[ur]\ar[dd]\\
&0\ar'[r][rr]&	& \mathscr{H}^{M-1}(\Phi(T))\ar'[r][rr] &		&\mathscr{H}^{M}(\Phi(B'))\ar'[r][rr] &			& \mathscr{H}^{M}(\Phi(B)) \\
0\ar[rr] & 	&\mathscr{H}^{M-1}(F(T)) \ar[ur] \ar[rr]&		& \mathscr{H}^{M}(F(B'))\ar[ur]\ar[rr] &		& \mathscr{H}^{M}(F(B))\ar[ur]
}\end{equation}
and since the two rightmost shaded squares commute, the leftmost shaded square will also commute. This shows functoriality.  

To show that the maps we chose do not depend on the choice of a short exact sequence, notice that given two short exact sequences $0\to A' \to A \to Q \to 0$ and $0\to B'\to B\to Q\to 0$ there is a short exact sequence $0\to C\to A\oplus B \to Q \to 0$ mapping to both of them. So we just need to prove this statement for two short exact sequences with maps between them. But then we are again in the situation of diagram (\ref{delta functor}), where $T=Q$ and the two rightmost maps in the diagram are the identity. So this follows again from the commutativity of the leftmost diagonal square.

Finally, we have to show that for every short exact sequence $0\to B' \to B \to Q \to 0$ the diagram
$$\xymatrix{
\mathscr{H}^{M-1}(F(Q))\ar[r] \ar[d]      &\mathscr{H}^{M}(F(B'))\ar[d] \\
\mathscr{H}^{M-1}(\Phi(Q))\ar[r] & \mathscr{H}^{M}(\Phi(B'))
}$$
is commutative. This follows immediately by the construction when $B'$ and $B$ are locally free. Otherwise, construct a diagram
$$\xymatrix{
0\ar[r] & A'\ar[r]\ar[d] & A \ar[r]\ar[d] & Q\ar[r]\ar@{=}[d]&  0 \\
0\ar[r] & B' \ar[r] & B\ar[r] & Q \ar[r] & 0
}$$
with $A, A'$ locally free of finite rank. Then we get a diagram as in (\ref{delta functor}) with $T=Q$ and where everything commutes except possibly for the bottom leftmost square, but that follows immediately since the leftmost vertical arrow is the identity.
\end{proof}

We are now ready to tackle the task of getting an isomorphism between the two functors in our special case. We will now assume that $\mathscr{B}^{M}$ is a direct sum of skyscraper sheaves. We will proceed as follows: we will first find an isomorphism for the subcategory of $D^{b}_{\mathrm{Coh}}(X)$ given by sheaves placed in degree zero. The isomorphism on the whole derived category will then follow by the technical Lemma \ref{generality}.

\begin{theorem}\label{onebla}
Let $X$ and $Y$ be smooth projective varieties over an algebraically closed field, with $X$ of dimension one, $F:D^{b}_{\mathrm{Coh}}(X)\to D^{b}_{\mathrm{Coh}}(Y)$ an exact functor. Assume that the corresponding $\mathscr{Bla}^{i}$ defined in Theorem \ref{all blas} are zero for $i \neq M$, and that $\mathscr{Bla}^{M}$ is a skyscraper sheaf supported at a finite number of points, $\mathscr{Bla}^{M}=\bigoplus_{j=1}^{t} k(p_{i},q_{i})$. Let $\Phi$ be the Fourier-Mukai transform associated to the sheaf $\mathscr{Bla}^{M}$ placed in degree $M$. Restrict the two functors to the full subcategory of coherent sheaves placed in degree 0. Then there exists an isomorphism of triangulated functors $s(\cdot):  \Phi(\cdot)\to F(\cdot) $.
\end{theorem}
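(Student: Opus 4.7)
The plan is to extend the natural isomorphism $s_{\mathscr{E}} : \Phi(\mathscr{E}) \to F(\mathscr{E})$ of Proposition \ref{isoforfree}, which is known on locally free sheaves of finite rank, to all of $\mathrm{Coh}(X)$ regarded as a full subcategory of $D^{b}_{\mathrm{Coh}}(X)$. The geometric input is that on a smooth curve $X$ every coherent sheaf $\mathscr{F}$ admits a length-one locally free resolution
\[
0 \to A' \to A \to \mathscr{F} \to 0
\]
with $A, A'$ locally free of finite rank. Applying $F$ and $\Phi$ produces distinguished triangles in $D^{b}_{\mathrm{Coh}}(Y)$ whose left-hand square commutes by naturality of $s$ on locally free sheaves. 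Axiom TR3 then yields some $s_{\mathscr{F}}: \Phi(\mathscr{F}) \to F(\mathscr{F})$ completing this to a morphism of triangles, and the 5-lemma applied to cohomology, using Proposition \ref{oncohomology} to confine the cohomology of both $\Phi(\mathscr{F})$ and $F(\mathscr{F})$ to degrees $M-1$ and $M$, together with Proposition \ref{isoforfree} identifying these cohomology sheaves, forces $s_{\mathscr{F}}$ to be a quasi-isomorphism.

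The principal obstacle is well-definedness: the lift from TR3 is not unique, and two candidate lifts differ by a morphism factoring as $\Phi(\mathscr{F}) \xrightarrow{+1} \Phi(A')[1] \xrightarrow{\alpha} F(\mathscr{F})$ for some $\alpha$ in
\[
\mathrm{Hom}(\Phi(A')[1], F(\mathscr{F})) \cong \mathrm{Hom}_{\mathrm{Coh}(Y)}(\mathscr{H}^{M}(\Phi(A')), \mathscr{H}^{M-1}(F(\mathscr{F})))\mathrm{,}
\]
which need not vanish since both source and target can be nonzero direct sums of skyscrapers at the points $q_{j}$. I would pin down a canonical $s_{\mathscr{F}}$ by requiring it to induce the preassigned $\delta$-functor isomorphism of Proposition \ref{isoforfree} at the cohomology level. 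A careful chase through the long exact sequence of the triangle $\Phi(A') \to \Phi(A) \to \Phi(\mathscr{F})$, using that the connecting map $\mathscr{H}^{M-1}(\Phi(\mathscr{F})) \hookrightarrow \mathscr{H}^{M}(\Phi(A'))$ is injective (because $\Phi$ of a locally free sheaf is concentrated in degree $M$), should show that the only $\alpha$ compatible with the preassigned maps on $\mathscr{H}^{M-1}$ and $\mathscr{H}^{M}$ is $\alpha = 0$, so that $s_{\mathscr{F}}$ is unique with this normalization.

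With $s_{\mathscr{F}}$ defined canonically for every coherent sheaf, functoriality is a standard consequence of the normalization: given $f: \mathscr{F} \to \mathscr{G}$, lift $f$ to a morphism of chosen resolutions and observe that the resulting naturality square must commute, because both paths give lifts of the same cohomology-level map and uniqueness then forces agreement in $D^{b}_{\mathrm{Coh}}(Y)$. Independence from the choice of resolution follows the same pattern, comparing any two resolutions via a third that dominates both (e.g.\ the fiber product of the two surjections onto $\mathscr{F}$). Finally, ``isomorphism of triangulated functors'' on the degree-zero subcategory amounts to compatibility of $s$ with the distinguished triangles coming from short exact sequences of coherent sheaves, which is again obtained by the same TR3/uniqueness mechanism applied to the triangle of such a short exact sequence.
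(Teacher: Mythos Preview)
Your overall strategy---extend $s$ from locally free sheaves to all of $\mathrm{Coh}(X)$ via TR3 fill-ins, and control the non-uniqueness by normalizing against the $\delta$-functor isomorphism of Proposition~\ref{isoforfree}---is sound in outline and close to the paper's approach. But the uniqueness step has a real gap.

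You argue that two TR3 lifts $s_1,s_2$ differ by $\xi=\alpha\circ\delta$ with $\alpha\in\mathrm{Hom}(\mathscr{H}^{M}(\Phi(A')),\mathscr{H}^{M-1}(F(\mathscr{F})))$, and that requiring $\xi$ to be zero on cohomology forces $\alpha=0$ because the connecting map $\mathscr{H}^{M-1}(\Phi(\mathscr{F}))\hookrightarrow\mathscr{H}^{M}(\Phi(A'))$ is injective. That inference is wrong: the vanishing of $\mathscr{H}^{M-1}(\xi)$ only says $\alpha$ kills the \emph{image} of that connecting map, which is a proper subspace of $\mathscr{H}^{M}(\Phi(A'))$ in general, and $\mathscr{H}^{M}(\xi)=0$ is automatic. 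More to the point, what you actually need to kill is $\xi$ itself, and a map $\Phi(\mathscr{F})\to F(\mathscr{F})$ that is zero on cohomology is exactly an element of $\mathrm{Ext}^{1}\bigl(\mathscr{H}^{M}(\Phi(\mathscr{F})),\mathscr{H}^{M-1}(F(\mathscr{F}))\bigr)$, which is typically nonzero since both sheaves are nontrivial sums of skyscrapers at the $q_{i}$. So the cohomology normalization alone does \emph{not} pin down $s_{\mathscr{F}}$.

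The missing ingredient is precisely the content of the paper's Lemma~\ref{stillzero}: because $A\twoheadrightarrow\mathscr{F}$ induces a surjection $p_{2*}(\mathscr{B}^{M}\otimes p_{1}^{*}A)\twoheadrightarrow p_{2*}(\mathscr{B}^{M}\otimes p_{1}^{*}\mathscr{F})$ of sheaves supported at finitely many points, this surjection \emph{splits}, and hence the induced map on $\mathrm{Ext}^{1}(-,\mathscr{H}^{M-1}(F(\mathscr{F})))$ is injective. Since $\xi$ precomposed with $\Phi(A)\to\Phi(\mathscr{F})$ is zero by construction, this injectivity gives $\xi=0$. Once you insert this argument, your uniqueness claim holds and the rest of your plan (functoriality, independence of resolution, triangle compatibility) goes through essentially as you describe. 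Note, however, that this splitting step genuinely uses the hypothesis that $\mathscr{B}^{M}$ is a direct sum of residue fields at closed points; your write-up should make that dependence explicit.

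For comparison, the paper does not attempt to characterize $s_{\mathscr{F}}$ uniquely at the outset. It instead makes an arbitrary choice of lift on torsion sheaves, then proves compatibility with morphisms in stages (locally-free-to-torsion via a twist trick exploiting that $\mathscr{B}^{M}$ is supported at the $(p_i,q_i)$, then torsion-to-torsion via Lemma~\ref{stillzero}), and handles general sheaves through the torsion/free splitting on a curve. Triangle compatibility (Part~VII) is established by a separate, fairly explicit $\mathrm{Ext}^{1}$ injectivity computation rather than by a TR3/uniqueness shortcut. Your approach is more uniform and conceptually cleaner, but it leans on exactly the same $\mathrm{Ext}^{1}$ injectivity fact that the paper isolates in Lemma~\ref{stillzero}; you cannot avoid it.
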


Before we prove the theorem, let us prove two technical lemmas that we will use in the proof. 

\begin{lemma}\label{mapsanddiagrams}
Let $X$ be a projective variety, $\Ol_{X}(1)$ be a very ample invertible sheaf on $X$. Consider a surjective map $\alpha:\oplus_{n}\Ol_{X}\to Q$ where $Q$ is a coherent sheaf on $X$. Then there exists an integer $h(\alpha)$ such that for all $m\geq h(\alpha)$ and for any map $\beta:\Ol_{X}(-m)\to Q$ there exists a map $\gamma:\Ol_{X}(-m)\to \oplus_{n}\Ol_{X}$ making the following diagram commute:
$$\xymatrix{
\Ol_{X}(-m) \ar@{-->}[rr]^{\gamma}\ar[dr]_{\beta} && \bigoplus_{t} \Ol_{X}\ar[dl]^{\alpha} \\
& Q
}$$
\end{lemma}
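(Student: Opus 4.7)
The plan is to reduce to the case of a finite direct sum, then apply Serre vanishing on the kernel, and finally lift sections.

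First I would observe that we may assume the indexing set of $\oplus_n \Ol_X$ is finite. Indeed, for each finite subset $J$ of the index set $I$, let $Q_J \subseteq Q$ denote the image of $\bigoplus_{i\in J}\Ol_X$ under $\alpha$. The $Q_J$ form a directed system of coherent subsheaves of the coherent sheaf $Q$ on the Noetherian scheme $X$, so the ascending chain condition on coherent subsheaves forces this system to stabilize: there is a finite $J_0 \subset I$ with $Q_{J_0} = Q$. Replacing $\alpha$ by its restriction to $\bigoplus_{i\in J_0} \Ol_X$ (which is a canonical direct summand of $\bigoplus_{i\in I}\Ol_X$, so any $\gamma$ built from the restriction composes with the inclusion to give the required $\gamma$ in the original diagram), I may assume $\alpha: \Ol_X^{\oplus t}\to Q$ for some finite $t$.

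Next, let $K = \ker(\alpha)$. This is now a coherent sheaf on $X$, and we have a short exact sequence
$$0 \to K \to \Ol_X^{\oplus t} \to Q \to 0\mathrm{.}$$
Tensoring with the line bundle $\Ol_X(m)$ (flat) preserves exactness, and taking the long exact sequence of cohomology yields, for each $m$,
$$\Gamma(X,\Ol_X(m))^{\oplus t} \to \Gamma(X,Q(m)) \to H^1(X, K(m))\mathrm{.}$$
By Serre vanishing applied to the coherent sheaf $K$ with the very ample bundle $\Ol_X(1)$, there exists an integer $h(\alpha)$ such that $H^1(X, K(m)) = 0$ for all $m \geq h(\alpha)$. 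Hence for all such $m$, the map $\Gamma(X,\Ol_X(m))^{\oplus t} \to \Gamma(X, Q(m))$ is surjective.

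Finally, I would translate this back into the statement of the lemma. A map $\beta: \Ol_X(-m) \to Q$ is precisely an element of $\mathrm{Hom}_{\Ol_X}(\Ol_X(-m), Q) = \Gamma(X, Q(m))$, and similarly a map $\gamma: \Ol_X(-m)\to \Ol_X^{\oplus t}$ is an element of $\Gamma(X, \Ol_X(m))^{\oplus t}$; composition with $\alpha$ on the $\gamma$-side corresponds to the surjective map of global sections obtained above. Thus, for $m \geq h(\alpha)$, any $\beta$ admits a lift $\gamma$, which is exactly the desired commutative diagram. I don't anticipate any substantive obstacle: the only subtle point is justifying the reduction from an arbitrary (possibly infinite) direct sum to a finite one, which is a standard consequence of Noetherianness.
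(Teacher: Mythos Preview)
Your proof is correct and follows essentially the same approach as the paper: form the kernel short exact sequence, twist by $\Ol_X(m)$, apply Serre vanishing to kill $H^1$ of the twisted kernel, and then lift the global section of $Q(m)$ corresponding to $\beta$. The only difference is your preliminary reduction from a possibly infinite direct sum to a finite one; in the paper the notation $\oplus_n\Ol_X$ already denotes the finite sum $\Ol_X^{\oplus n}$ (as is clear from its use in the surrounding arguments), so this step is unnecessary, though harmless and correctly argued.
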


\begin{proof}
We have a short exact sequence
$$0\to \mathrm{Ker}(\alpha)\to \oplus_{n} \Ol_{X}\to Q\to 0\mathrm{.}$$
Twist by $\Ol_{X}(m)$ to get 
$$0\to \mathrm{Ker}(\alpha)(m)\to \oplus_{n} \Ol_{X}(m)\to Q(m)\to 0\mathrm{.}$$
A map $\beta:\Ol_{X}(-m)\to Q$ is the same thing as a map $\Ol_{X}\to Q(m)$, hence as an element $\beta(m)\in H^{0}(X,Q(m))$. By Serre vanishing, there exists an $h(\alpha)\geq 0$ such that $H^{1}(X,\mathrm{Ker}(\alpha))(m)=0$ for all $m\geq h(\alpha)$. Hence $\beta(m)$ lifts to a section $\gamma(m)$ of $H^{0}(X, \oplus_{n}\Ol_{X}(m))$. Twist down by $m$ to get the desired map $\gamma: \Ol_{X}(-m)\to \oplus_{n} \Ol_{X}$.
\end{proof}


\begin{lemma}\label{indep sections}
Let $X$ be a smooth projective variety over an algebraically closed field, let $p_{1}, \ldots, p_{t}\in X$ and let $\mathscr{E}$ be a locally free sheaf of rank $r$ generated by global sections. Then there exist an open set $\mathscr{U}$ containing $p_{1},\ldots,p_{t}$ and global sections $s_{1}, \ldots, s_{r}$ of $\mathscr{E}$ that generate the stalk $\mathscr{E}_{p}$ at each point $p\in \mathscr{U}$.
\end{lemma}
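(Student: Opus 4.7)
The plan is first to find $r$ global sections whose images form a basis simultaneously in each fiber $\mathscr{E}(p_i) := \mathscr{E}_{p_i}/\mathfrak{m}_{p_i}\mathscr{E}_{p_i}$, and then to argue via Nakayama plus the coherence of the cokernel that these same sections generate stalks on an entire open neighborhood of $\{p_1,\ldots,p_t\}$.

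Since $X$ is projective, $V := H^{0}(X,\mathscr{E})$ is a finite-dimensional $k$-vector space. Because $\mathscr{E}$ is locally free of rank $r$ and globally generated, the evaluation map $\mathrm{ev}_{i}\colon V \to \mathscr{E}(p_i)$ is surjective onto an $r$-dimensional $k$-vector space for every $i$. For each $i$ let $W_{i} \subset V^{r}$ be the set of tuples $(s_{1},\ldots,s_{r})$ such that $\mathrm{ev}_{i}(s_{1}),\ldots,\mathrm{ev}_{i}(s_{r})$ form a basis of $\mathscr{E}(p_i)$. Once we fix a basis of $\mathscr{E}(p_i)$, this is the non-vanishing locus of an $r\times r$ determinant in the coordinates on the affine space $V^{r}$, so each $W_{i}$ is Zariski open, and it is nonempty since $\mathrm{ev}_{i}$ is surjective.

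Because $k$ is algebraically closed it is infinite, so $V^{r}$ is an irreducible variety, and the finite intersection $\bigcap_{i=1}^{t} W_{i}$ is a nonempty open subset. Pick any $(s_{1},\ldots,s_{r})$ in it. The corresponding map $\alpha\colon \Ol_{X}^{\oplus r} \to \mathscr{E}$ is surjective on the fiber at each $p_i$ by construction, hence surjective on the stalk at each $p_i$ by Nakayama; since both sheaves are locally free of rank $r$, $\alpha$ is in fact an isomorphism of stalks at every $p_i$. The cokernel $\mathrm{coker}(\alpha)$ is coherent, so its support $Z$ is a closed subset of $X$ disjoint from $\{p_{1},\ldots,p_{t}\}$, and setting $\mathscr{U} = X\setminus Z$ produces the required open neighborhood on which $s_{1},\ldots,s_{r}$ generate every stalk.

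The only nontrivial point is the non-emptiness of $\bigcap W_{i}$, which rests on $k$ being infinite; every other step is a standard combination of Nakayama's lemma with the fact that the support of a coherent cokernel is closed.
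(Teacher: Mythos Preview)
Your proof is correct and takes a somewhat different route from the paper's. The paper builds the sections inductively: assuming $s_1,\ldots,s_n$ are already linearly independent in each fiber at $p_1,\ldots,p_t$, it forms the quotient $Q=\mathscr{E}/\langle s_1,\ldots,s_n\rangle$, picks for each $i$ a section $u_i$ whose image in $Q$ is nonzero at $p_i$, and then notes that inside the span $V'$ of the $u_i$ the locus $\{u:f(u)(p_i)=0\}$ is a proper linear subspace, so (since $k$ is infinite) the finite union over $i$ is still proper and one can choose $s_{n+1}$ in the complement. You instead work globally in the affine space $V^r$ of all $r$-tuples at once, observing that for each $i$ the condition ``basis of $\mathscr{E}(p_i)$'' cuts out a nonempty Zariski open, and that irreducibility of $V^r$ forces the finite intersection to be nonempty. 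Your argument is a bit slicker and makes the genericity transparent; the paper's argument is more hands-on and avoids invoking the Zariski topology on $V^r$, instead using only the elementary fact that a finite union of proper subspaces over an infinite field is proper. Both arguments hinge on exactly the same point---the infinitude of $k$---and the passage from fibers to an open neighborhood via Nakayama and the closed support of the cokernel is essentially identical in spirit (the paper leaves this last step implicit).
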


\begin{proof}
Assume we found $s_{1}, \ldots, s_{n}\in \Gamma(X,\mathscr{E})$ that are linearly independent at each stalk at $p_{1},\ldots,p_{t}$ so that we have 
\begin{align*}
0\to \Ol_{X}^{\oplus n} &\to \mathscr{E} \xrightarrow{f} Q\to 0 \\
e_{j} &\mapsto s_{j}\mathrm{.}
\end{align*}
Let us find a global section of $\mathscr{E}$ such that its image in $Q$ doesn't vanish at $p_{1}, \ldots, p_{t}$. Let $u_{i}\in \Gamma(X,\mathscr{E})$ such that $f(u_{i})$ doesn't vanish at $p_{i}$ (we can do this because $f$ is surjective on stalks and $\mathscr{E}$ is generated by global sections). Then $u_{1},\ldots, u_{t}$ form a sub-vector space $V$ of $\Gamma(X,\mathscr{E})$ of dimension $l$ for some $l$ and, for each $i$, $\dim(\{u\in V : f(u)(p_{i})=0\})\leq l-1$. Hence
$$\{u\in V : f(u)(p_{i})=0 \text{ for some i}\} =\bigcup_{i} \{u\in V : f(u)(p_{i})=0\}$$
is a union of subsets of dimension less or equal to $l-1$ and hence it is strictly contained in $V$ since our field of definition is infinite (because it is algebraically closed). So we can find a section $s_{n+1}$ in $V$ such that $f(s_{n+1})$ doesn't vanish at any of the $p_{j}$. Then $s_{1}, \ldots, s_{n+1}$ are linearly independent at each $p_{j}$ as sections of $\mathscr{E}$. We can keep doing this as long as $\text{rk}Q>0$. Then the sections $s_{1}, \ldots, s_{r}$ will generate the stalk $\mathscr{E}_{p}$ at each point $p$ in an open set $\mathscr{U}$ containing $p_{1}, \ldots, p_{t}$.
\end{proof}

\begin{proof}[Proof of Theorem \ref{onebla}]
We will first construct the isomorphism on objects, starting with the subcategories of coherent sheaves on $X$ given by locally free sheaves and torsion sheaves. This will a priori involve making non-canonical choices, but as it later turns out, the choices we are making are actually unique. Then we will prove that the isomorphisms are compatible with morphisms and this will allow us to define said isomorphism on a general coherent sheaf. Lastly, we will show that the given isomorphisms induce maps of triangles when applied to a short exact sequence of sheaves.

\textbf{I. On the subcategory of locally free sheaves:} Let $\mathscr{E}$ be a locally free sheaf of finite rank on $X$. Then by Proposition \ref{isoforfree} there is a functorial isomorphism $s(\mathscr{E}): \Phi(\mathscr{E})\to F(\mathscr{E})$. 

\textbf{II. On torsion sheaves: } Consider a torsion sheaf $Q$ on $X$. There exists a short exact sequence $0\to K\to \Ol_{X}^{\oplus n}  \xrightarrow{\alpha}  Q\to 0$, with $K$ a locally free sheaf. Then we have a diagram
$$\xymatrix{
\Phi(K) \ar[r] \ar[d]^{s(K)}_{qis}		& \Phi(\Ol_{X}^{\oplus n}) \ar[r] \ar[d]^{s(\Ol_{X}^{\oplus n})}_{qis}		&\Phi(Q)\ar@{-->}[d] \\
F(K) \ar[r]							&F(\Ol_{X}^{\oplus n}) \ar[r]									&F(Q)
}$$
hence there exists a dotted arrow $\Phi(Q)\to F(Q)$ which is a quasi-isomorphism (this dotted arrow is not necessarily unique). Choose one such arrow and call it $s(Q)$. Notice that $s(Q)$ will induce on cohomology the maps that we found in Proposition \ref{isoforfree} because the maps induced on the $M^{th}$ cohomology are the same as the ones in Proposition \ref{isoforfree}, and the maps $\mathscr{H}^{M-1}(\Phi(Q)) \to \mathscr{H}^{M}(\Phi(K))$ and $\mathscr{H}^{M-1}(F(Q)) \to \mathscr{H}^{M}(F(K))$ are injective.

\textbf{III. $s(-)$ is compatible with maps $\beta:\mathscr{E}\to Q$, $\mathscr{E}$ locally free, $Q$ torsion:} First of all we will prove the following: for any map $\beta:\Ol_{X}(i) \to Q$, the diagram 
$$\xymatrix{
\Phi(\Ol_{X}(i)) \ar[d]_{s(\Ol_{X}(i))} \ar[r]^-{\Phi(\beta)}  	&\Phi(Q)\ar[d]^{s(Q)} \\
F(\Ol_{X}(i)) \ar[r]^-{F(\beta)}				& F(Q)
}$$
commutes. Consider first the case where $i\leq -h(\alpha)$ where $h(\alpha)$ is defined as in Lemma \ref{mapsanddiagrams}, and $\alpha$ is as in \textbf{II}. For every map $\beta: \Ol_{X}(i) \to Q$ with $i\leq -h(\alpha)$ we have a diagram 
$$\xymatrix{
\Ol_{X}(i) \ar@{-->}[rr]\ar[dr]_{\beta} && \bigoplus_{n} \Ol_{X}\ar[dl]^{\alpha} \\
& Q &
}$$
By applying the functors $F$ and $\Phi$ we obtain the following diagram:
$$\xymatrix@R=5mm{
									&		&\Phi(\Ol_{X}(i)) \ar[r]^-{\Phi(\beta)} \ar[ddll] \ar'[d][ddd]		&\Phi(Q) \ar[ddll]^(.65){id} \ar[ddd]^{s(Q)}	\\ 
									&			&			\\
\Phi(\Ol_{X}^{\oplus n}) \ar[r]_-{\Phi(\alpha)} \ar[ddd]	&\Phi(Q) \ar[ddd]_{s(Q)}	&									&						\\
									&				&F(\Ol_{X}(i)) \ar[r]^-{F(\beta)} \ar'[dl][ddll]				&F(Q)\ar[ddll]^(.65){id}			\\
									&			&		\\
F(\Ol_{X}^{\oplus n}) \ar[r]_-{F(\delta)} 			&F(Q)
}$$
and the front square commutes, hence the back square will also commute.

Now let $i>-h(\alpha)$ and consider $\beta: \Ol_{X}(i)\to Q$. Then pick any map $\gamma: \Ol_{X}(-h(\alpha))\to \Ol_{X}(i)$ such that $\gamma$ is an isomorphism on an open set containing $p_{1},\ldots, p_{t}$. Then the map $\Phi(\gamma): \Phi(\Ol_{X}(-h(\alpha))) \to \Phi(\Ol_{X}(i))$ is an isomorphism: in fact the map $p_{1}^{*}(\gamma):p_{1}^{*}(\Ol_{X}(-h(\alpha)))\to p_{1}^{*}(\Ol_{X}(i))$ is an isomorphism on an open set containing $(p_{1},q_{1}), \ldots, (p_{t},q_{t})$ and hence we will get an isomorphism when tensoring with a sheaf supported at $(p_{1},q_{1}), \ldots, (p_{t},q_{t})$. By letting $\delta=\beta\circ\gamma$ once again we get a diagram
$$\xymatrix@R=5mm{
											&	   							&\Phi(\Ol_{X}(i)) \ar[r]^-{\Phi(\beta)} \ar'[d][ddd]		&\Phi(Q)  \ar[ddd]	\\ 
									&   &		\\
\Phi(\Ol_{X}(-h(\alpha))) \ar[r]_-{\Phi(\delta)} \ar[ddd] \ar[uurr]^{\Phi(\gamma)} 	&\Phi(Q) \ar[ddd]\ar[uurr]^(.35){id}	&					&		\\
															&								&F(\Ol_{X}(i)) \ar[r]^{F(\beta)}		&F(Q)		\\
															& & \\
F(\Ol_{X}(-h(\alpha))) \ar[r]_-{F(\delta)} \ar'[ur][uurr]									&F(Q)\ar[uurr]_(.35){id}
}$$
since $\Phi(\gamma)$ is a quasi-isomorphism and the front square is commutative, the back square will also commute.

Now consider any map $\beta: \mathscr{E}\to Q$ with $\mathscr{E}$ locally free and $Q$ torsion. Let $m$ be such that $\mathscr{E}(m)$ is generated by global sections, and let $r=\text{rk} \mathscr{E}$. By Lemma \ref{indep sections} we can find $s_{1},\ldots, s_{r}$ global sections of  $\mathscr{E}(m)$ that are linearly independent at each stalk of an open set $\mathscr{U}$ containing $p_{1}, \ldots, p_{t}$. Then the corresponding map $\bigoplus_{r} \Ol_{X} \to \mathscr{E}(m)$ is injective and it is an isomorphism on $\mathscr{U}$. Twisting down by $m$ we get a map $\gamma:\bigoplus_{r} \Ol_{X}(-m) \to \mathscr{E}$ which is an isomorphism on $\mathscr{U}$. By letting $\delta=\beta \circ \gamma$ we get again a diagram like the above one,
$$\xymatrix@R=5mm{
									&						&\Phi(\mathscr{E}) \ar[r]^-{\Phi(\beta)}  \ar'[d][ddd]\ar@{<-}[ddll]+<2.8cm,3mm>_{\Phi(\gamma)} 		&\Phi(Q)  \ar[ddd]^{s(Q)}	\\ 
									& & &\\
\Phi(\bigoplus_{r}\Ol_{X}(-m))\cong  \bigoplus_{r}\Phi(\Ol_{X}(-m)) \ar[r]_-{\Phi(\delta)} \ar@<1.7cm>[ddd]       &\Phi(Q) \ar[ddd]_{s(Q)} \ar[uurr]^(.35){id}&				&				\\
							&						&F(\mathscr{E}) \ar[r]^-{F(\beta)}\ar@{<-}'[dl][ddll]+<2.8cm,3mm>		&F(Q)		\\
							& & & \\
F(\bigoplus_{r}\Ol_{X}(-m)) \cong  \bigoplus_{r}F(\Ol_{X}(-m)) \ar[r]_-{F(\delta)} 					&F(Q)\ar[uurr]^(.35){id}
}$$
and since $\Phi(\gamma)$ is a quasi-isomorphism and the front square commutes, the back square will also commute.

\textbf{IV. $s(-)$ is compatible with maps $\eta: Q\to T$, $Q$ and $T$ torsion:} We need to show that for any map between torsion sheaves $Q\to T$, the corresponding diagram
$$\xymatrix{
\Phi(Q) \ar[r]^{\Phi(\eta)} \ar[d]_{s(Q)} & \Phi(T) \ar[d]^{s(T)} \\
F(Q) \ar[r]^{F(\eta)} 		& F(T) 	
}$$
is commutative. This will also prove that our choice of $s(Q)$ is canonical. To do this, consider a locally free sheaf $\mathscr{E}=\bigoplus_{r} \Ol_{X}$ with a surjection $f:\mathscr{E}\to Q$. For consistency we will represent this situation with a square diagram as before
$$\xymatrix{
\mathscr{E} \ar[r]^{\eta\circ f} \ar[d]_{f}  & T\ar[d]^{id} \\
Q\ar[r]^{\eta} & T
}$$
Then we get the following diagram:
$$\xymatrix@R=5mm{
							&						  &\Phi(Q) \ar[r]^{\Phi(\eta)} 	 \ar'[d][ddd]		&\Phi(T)  \ar[ddd]	\\ 
									&&&  \\
\Phi(\mathscr{E}) \ar[r]_{\Phi(\eta\circ f)} \ar[ddd] \ar[uurr]^{\Phi(f)} 	&\Phi(T) \ar[ddd]\ar[uurr]^(.35){id}  &								&						\\
									&						&F(Q) \ar[r]^{F(\eta)} 		&F(T)		\\
									&&& \\
F(\mathscr{E}) \ar[r]_{F(\eta\circ f)} \ar'[ur][uurr]					&F(T)\ar[uurr]^(.35){id}
}$$
where the front square commutes by III. Hence the back square will also commute after pre-composing with the map $\Phi(\mathscr{E})\to\Phi(Q)$. But then we can conclude that the top right square also commutes: in fact it commutes on cohomology because of Proposition \ref{isoforfree}, so we can apply Lemma \ref{stillzero} below.

\textbf{V. On a general coherent sheaf on $X$:} Let $\F$ be any coherent sheaf on $X$. Then we have a decomposition $\F\cong \F_{T}\oplus \F_{F}$ where $\F_{T}$ is the canonical summand consisting of the torsion part of $\F$ and $\F_{F}$ corresponds to the torsion-free part (this summand is not canonical). Then define $s(\F)=s(\F_{T})\oplus s(\F_{F})$. We need to show that this map doesn't depend on the choice of the decomposition. So consider two such decompositions $\F\cong \F_{T}\oplus \F_{F}$ and $\F\cong \F_{T}\oplus \F'_{F}$ and call $s(\F)$ and $s'(\F)$ respectively the two induced maps on $\Phi(\F)$. Then the identity $\mathscr{F} \to \mathscr{F}$ induces a map  $\alpha: \F_{F}\to \F'_{F} \oplus \F_{T}$, and by I. and III. the following diagram is commutative:
$$\xymatrix{
\Phi(\F_{F}) \ar[r] \ar[d] & \Phi(\F'_{F})\oplus \Phi(\F_{T}) \ar@<-6mm>[d]_{s(\F_{T})} \ar@<+6mm>[d]^{s(\F_{F})}  \\
F(\F_{F}) \ar[r] & F(\F'_{F})\oplus F(\F_{T})
}$$
whereas the diagram for the torsion part is clearly commutative because the induced maps are just the identity. Hence every square in the following diagram is commutative:
$$\xymatrix{
\Phi(\F) \ar[r]^-{\cong}\ar[d]_{s(\F)}\ar@/^8mm/[rrr]^{id} & \Phi(\F_{T}) \oplus  \Phi(\F_{F})\ar@<-6mm>[d]_{s(\F_{T})} \ar@<+6mm>[d]^{s(\F_{F})} \ar[r]^-{id\oplus \Phi(\alpha)} & \Phi(\F_{T}) \oplus  \Phi(\F'_{F})\ar@<-6mm>[d]_{s(\F_{T})} \ar@<+6mm>[d]^{s(\F'_{F})}  \ar[r]^-{\cong} & \Phi(\F)\ar[d]^{s'(\F)}\\
F(\F)\ar[r]^-{\cong}\ar@/_8mm/[rrr]_{id} &F(\F_{T})  \oplus  F(\F_{F})\ar[r]^-{id\oplus F(\alpha)} &F(\F_{T})  \oplus  F(\F'_{F})\ar[r]^-{\cong} & F(\F)\mathrm{.}
}$$
It follows that the external rectangle commutes, which proves precisely that $s(\F)=s'(\F)$.

\textbf{VI. $s(-)$ is compatible with any maps $A\to B$, for $A$ and $B$ coherent sheaves:} Given a map $f: A\to B$, write $A=A_{F}\oplus A_{T}$ and $B=B_{F}\oplus B_{T}$. Then $s$ will be compatible with $\Phi(f)$ and $F(f)$ because it is compatible with the maps $A_{F}\to B_{F}$, $A_{F}\to B_{T}$, and $A_{T}\to B_{T}$.

\textbf{VII. $s(-)$ is compatible with triangles of the type  $0\to A\to B\to C\to 0$ for $A$ and $B$ locally free:} The last thing to show is that given a short exact sequence of coherent sheaves on X, $0\to A\to B\to C\to 0$, the maps $s(A)$, $s(B)$ and $s(C)$ give a morphism of triangles
\begin{equation}\label{triangle}\xymatrix{
\Phi(A)\ar[r]\ar[d]^{s(A)} &\Phi(B)\ar[r]\ar[d]^{s(B)} &\Phi(C)\ar[r]\ar[d]^{s(C)} &\Phi(A)[1] \ar[d]^{s(A)[1]}	\\
F(A) \ar[r] & F(B)\ar[r] & F(C)\ar[r] & F(A)[1]\mathrm{,}
}\end{equation}
i.e. we need to prove that the rightmost square is commutative. First of all we will analyze the map $\Phi(B)\to \Phi(C)$. We know that $\Phi(B)$ is supported in degree $M$, whereas $\Phi(C)$ is supported in degrees $M$ and $M-1$: hence, by \cite{dold}, as a complex we have $\Phi(C)\cong \mathscr{H}^{M}(\phi(C))[-M]\oplus \mathscr{H}^{M-1}(\phi(C))[-M+1]$ (in a non-canonical way). The situation looks as follows:
$$\xymatrix@R=.1mm{
\Phi(B) \ar[r] \ar[ddr]  & \mathscr{H}^{M}(\Phi(C))[-M]  \ar[ddr]  \\ 
& \oplus \\
									& \mathscr{H}^{M-1}(\Phi(C))[-M+1] \ar[r] & \Phi(A)[1]\mathrm{.}	\\  
}$$
We will now show that the induced maps $\Phi(B)\to \mathscr{H}^{M-1}(\Phi(C))[-M+1]$, as well as $\mathscr{H}^{M}(\Phi(C))[-M]\to \Phi(A)[1]$, are zero in $D^{b}_{\mathrm{Coh}}(Y)$ for some choice of a decomposition $\Phi(C)\cong \mathscr{H}^{M-1}(\Phi(C))[-M+1] \oplus \mathscr{H}^{M}(\Phi(C))[-M]$. In fact, consider a locally free resolution of $p_{1}^{*}C$, $\bar{C}_{-1}\to \bar{C}_{0}$. Then the map $B\to C$ induces an actual map of complexes
$$\xymatrix{
(p_{1}^{*}B\otimes \mathscr{Bla}^{M})[-M] \ar[r] & (\bar{C}_{0}\otimes \mathscr{Bla}^{M})[-M] \\
& (\bar{C}_{-1}\otimes \mathscr{Bla}^{M})[-M+1] \ar[u]\mathrm{.}
}$$
Now, since these complexes are direct sums of complexes of vector spaces over $k(p_{i},q_{i})$, we can write the complex on the right as a direct sum of its cohomology groups and get a map of complexes 
$$\xymatrix@R=.1mm{
(p_{1}^{*}B\otimes \mathscr{Bla}^{M})[-M] \ar[r] & (\mathscr{H}^{M}(p_{1}^{*}(C)\stackrel{L}{\otimes}\mathscr{Bla}^{M})) [-M]\\
&\oplus \\
&(\mathscr{H}^{M-1}(p_{1}^{*}(C)\stackrel{L}{\otimes}\mathscr{Bla}^{M})) [-M+1]\mathrm{,}
}$$
and by pushing forward to $Y$ we get a map of complexes
$$\xymatrix@R=.1mm{
\Phi(B) \ar[r] & p_{2*}(\mathscr{H}^{M}(p_{1}^{*}(C)\stackrel{L}{\otimes}\mathscr{Bla}^{M}))[-M]\cong \mathscr{H}^{M}(\Phi(C))[-M] \\
&\oplus \\
& p_{2*}(\mathscr{H}^{M-1}(p_{1}^{*}(C)\stackrel{L}{\otimes}\mathscr{Bla}^{M})) [-M+1] \cong \mathscr{H}^{M-1}(\Phi(C))[-M+1] \mathrm{.}
}$$
This proves precisely that the map $\Phi(B)\to \mathscr{H}^{M-1}(\phi(C))[-M+1]$ is zero ($p_{2*}$ is exact here because the sheaves are flasque). For the second map we can reason as follows: since the map $\Phi(B)\to \mathscr{H}^{M-1}(\phi(C))[-M+1]$ is zero, and we know that $\Phi(B)\to \Phi(A)[1]$ is zero, it follows that the composition $\Phi(B)\to\mathscr{H}^{M}(\Phi(C))[-M] \to \Phi(A)[1]$ is also zero. Hence the result follows if the map 
$$\text{Hom}(\mathscr{H}^{M}(\Phi(C))[-M], \Phi(A)[1]) \to \text{Hom}(\Phi(B), \Phi(A)[1])$$
is injective, i.e. the map 
$$\text{Ext}^{1}(\mathscr{H}^{M}(\Phi(C)), \mathscr{H}^{M}(\Phi(A))) \to \text{Ext}^{1}( \mathscr{H}^{M}(\Phi(B)),  \mathscr{H}^{M}(\Phi(A)))$$
is injective. Assuming that $B$ is locally free of rank $r$ and 
$$C\cong \bigoplus_{i=1}^{t}\bigoplus_{j_{i}} \Ol_{p_{i}}/\mathfrak{m}_{p_{i}}^{h_{j_{i}}}\Ol_{p_{i}}\oplus \text{torsion part supported away from the }p_{i} \oplus \text{ free part of rank }r_{1}\leq r\mathrm{,}$$
a short computation shows that the map in question is
\begin{align*}
&\bigoplus_{i=1}^{t} \bigoplus_{r}  \mathscr{H}^{M}(\Phi(A))/\mathfrak{m}_{q_{i}} \mathscr{H}^{M}(\Phi(A)) \xrightarrow{\alpha} \bigoplus_{i=1}^{t} \bigoplus_{j_{i}} \mathscr{H}^{M}(\Phi(A))/\mathfrak{m}_{q_{i}}  \mathscr{H}^{M}(\Phi(A)) \oplus \bigoplus_{r_{1}} \mathscr{H}^{M}(\Phi(A))/\mathfrak{m}_{q_{i}}  \mathscr{H}^{M}(\Phi(A))\mathrm{,}  \\
& \text{ where }j_{i}\leq r \text{ and there exists a basis such that } \alpha=\left(\begin{array}{ccc} 1 \\ & \ddots \\ & & 1 \\ 0 & \ldots &0\end{array}\right)\mathrm{;}
\end{align*}
hence it is injective as desired.

We're finally ready to show that 
$$\xymatrix{
\Phi(C) \ar[r] \ar[d] & \Phi(A)[1] \ar[d]\\
F(C) \ar[r] & F(A)[1]
}$$
commutes. To do this, take the same decomposition $\Phi(C)\cong \mathscr{H}^{M-1}(\Phi(C))[-M+1] \oplus \mathscr{H}^{M}(\Phi(C))[-M]$ as above. We will show that the two diagrams 
$$\xymatrix{
\mathscr{H}^{M}(\Phi(C))[-M] \ar[r] \ar[d] & \Phi(A)[1] \ar[d]\\
F(C) \ar[r] & F(A)[1]
}
\qquad \text{ and } \qquad
\xymatrix{
\mathscr{H}^{M-1}(\Phi(C))[-M+1] \ar[r] \ar[d] & \Phi(A)[1] \ar[d]\\
F(C) \ar[r] & F(A)[1]
}
$$
are both commutative. 

Notice that the composition $\Phi(B)\to \Phi(C) \to F(C) \to F(A)[1]$ is zero, because we already know that the central square in (\ref{triangle}) commutes, and $F(B) \to F(C) \to F(A)[1]$ is zero. Moreover, since $\Phi(B)\to  \mathscr{H}^{M-1}(\Phi(C))[-M+1]$ is the zero map, this means that the composition $\Phi(B)\to \mathscr{H}^{M}(\Phi(C))[-M]\to F(C) \to F(A)[1]$ is zero. Since we already know that on objects we have an isomorphism $F(\cdot)\cong \Phi(\cdot)$, we can repeat the same computation as above and get that
$$\text{Hom}(\mathscr{H}^{M}(\Phi(C))[-M], F(A)[1]) \to \text{Hom}(\Phi(B), F(A)[1])$$
is again injective hence the composition $\mathscr{H}^{M}(\Phi(C))[-M]\to F(C)\to F(A)[1]$ is zero. In the same way, we know that $\mathscr{H}^{M}(\Phi(C))[-M]\to \Phi(A) \to F(A)[1]$ is also zero. This shows that the first square commutes.

To show that the second square above is commutative, we just need to show that the square
$$\xymatrix{
\mathscr{H}^{M-1}(\Phi(C))[-M+1] \ar[r] \ar[d] & \Phi(A)[1] \ar[d]\\
\mathscr{H}^{M-1}(F(C))[-M+1] \ar[r] & F(A)[1]
}$$
is commutative. But this follows from Proposition \ref{isoforfree}.

\textbf{VIII. $s(-)$ is compatible with triangles of the type  $0\to A\to B\to C\to 0$ for any $A$ and $B$:} in this situation we can find $A'$, $B'$ locally free and a diagram
$$\xymatrix{
0\ar[r]	&A'\ar[r]\ar[d]		&B'\ar[r]\ar[d]		& C\ar[r]\ar@{=}[d] & 0 \\
0\ar[r]	&A\ar[r]			&B\ar[r]			& C\ar[r]				& 0 \mathrm{.}
}$$
Then we get
$$\xymatrix{
\Phi(A)\ar[r]					&\Phi(B)\ar[r]					& \Phi(C)\ar[r]				& \Phi(A)[1] \ar@/^20mm/[ddd]^{s(A)[1]}					\\	
\Phi(A')\ar[r]\ar[d]\ar[u]		&\Phi(B')\ar[r]\ar[d]\ar[u]		& \Phi(C)\ar[r]\ar[d]^{\hskip .3in{\circlearrowleft}} \ar@{=}[u]_{\hskip .3in{\circlearrowleft}} 	&  \Phi(A')[1]\ar[d]^{\hskip .3in{\circlearrowleft}}\ar[u]	\\
F(A')\ar[r]\ar[d]				&F(B')\ar[r]\ar[d]				& F(C)\ar[r]\ar@{=}[d]^{\hskip .3in{\circlearrowleft}} 	&  F(A')[1]\ar[d]\\
F(A)\ar[r]					&F(B)\ar[r]					& F(C)\ar[r]				& F(A)[1] 
}$$
where the top and bottom right squares commute because $\Phi$ and $F$ are functors, the middle right square by part VII, and the semi-circle by part III. Therefore the boundary maps commute:
$$\xymatrix{
\Phi(C)\ar[r]\ar[d]^{s(C)} &\Phi(A)[1] \ar[d]^{s(A)[1]}	\\
 F(C)\ar[r] & F(A)[1]\mathrm{.}
}$$
\vskip-.5cm
\end{proof}

\begin{lemma}\label{stillzero}
In the setup of Theorem \ref{onebla}, let $A$, $B$ be two torsion coherent sheaves on $X$. Consider a coherent sheaf $A'$ on $X$ with a surjection $A'\to A$. Consider a map 
$$\xi:\Phi(A)\to F(B)$$
that induces the zero map on all cohomology groups. If the composition $\Phi(A')\to \Phi(A)\to F(B)$ is zero, then it follows that the map $\xi$ is zero.
\end{lemma}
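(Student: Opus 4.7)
The plan is to encode $\xi$ as an $\mathrm{Ext}^{1}$-class and show it vanishes, exploiting the very rigid $k(q_{i})$-structure of the cohomology sheaves involved.

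First, I may assume $A'$ is locally free without loss of generality: choosing a locally free cover $A'' \twoheadrightarrow A'$ gives a locally free surjection onto $A$ for which the composition hypothesis is preserved. Under this assumption, Proposition \ref{oncohomology} puts $\Phi(A')$ in cohomological degree $M$ only, and $\mathscr{H}^{M}(\phi) : \mathscr{H}^{M}(\Phi(A')) \twoheadrightarrow \mathscr{H}^{M}(\Phi(A))$ is surjective since $\mathscr{H}^{M} \circ \Phi = p_{2*}(\mathscr{Bla}^{M} \otimes p_{1}^{*}(-))$ is right exact. Next, I would obtain a Dold-type splitting $\Phi(A) \cong \mathscr{H}^{M-1}(\Phi(A))[-M+1] \oplus \mathscr{H}^{M}(\Phi(A))[-M]$ exactly as in step VII of the proof of Theorem \ref{onebla}: on $X \times Y$ the complex $\mathscr{Bla}^{M}[-M] \otimes^{L} p_{1}^{*}A$ is a direct sum of complexes of $k(p_{i},q_{i})$-vector spaces supported at $\{(p_{i},q_{i})\}$, hence splits into its cohomology, and $Rp_{2*}$ (which equals $p_{2*}$ on these skyscrapers) carries the splitting down to $D^{b}(Y)$. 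I would also verify that all four cohomology sheaves $\mathscr{H}^{M-1}(\Phi(A))$, $\mathscr{H}^{M}(\Phi(A))$, $\mathscr{H}^{M-1}(F(B))$, $\mathscr{H}^{M}(F(B))$ are direct sums of residue field skyscrapers $k(q_{i})$; the only non-obvious case, $\mathscr{H}^{M-1}(F(B))$, embeds into some $p_{2*}(\mathscr{Bla}^{M} \otimes p_{1}^{*}\mathscr{E}')$ via any locally free resolution $0 \to \mathscr{E}' \to \mathscr{E} \to B \to 0$.

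Decomposing $\xi = \xi_{M-1} \oplus \xi_{M}$, I use the standard filtration triangle $\mathscr{H}^{M-1}(F(B))[-M+1] \to F(B) \to \mathscr{H}^{M}(F(B))[-M] \xrightarrow{+1}$. Applying $\mathrm{Hom}(\mathscr{H}^{M-1}(\Phi(A))[-M+1],-)$ yields an isomorphism onto $\mathrm{Hom}(\mathscr{H}^{M-1}(\Phi(A)), \mathscr{H}^{M-1}(F(B)))$ given by the induced map on $\mathscr{H}^{M-1}$; by the vanishing hypothesis this forces $\xi_{M-1}=0$. Applying $\mathrm{Hom}(\mathscr{H}^{M}(\Phi(A))[-M],-)$ to the same triangle produces an injection $0 \to \mathrm{Ext}^{1}(\mathscr{H}^{M}(\Phi(A)), \mathscr{H}^{M-1}(F(B))) \to \mathrm{Hom}(\mathscr{H}^{M}(\Phi(A))[-M], F(B))$ whose image is exactly the subgroup of maps inducing $0$ on $\mathscr{H}^{M}$, so $\xi_{M}$ corresponds to a unique class $c \in \mathrm{Ext}^{1}(\mathscr{H}^{M}(\Phi(A)), \mathscr{H}^{M-1}(F(B)))$.

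Finally, since $\Phi(A')$ sits in degree $M$ only and $\xi_{M-1}=0$, the composition $\xi \circ \phi$ equals $\xi_{M}$ composed with $\mathscr{H}^{M}(\phi)[-M]$, which by naturality of the $\mathrm{Ext}$-classification corresponds to $\mathscr{H}^{M}(\phi)^{*}(c)$; the hypothesis $\xi \circ \phi = 0$ thus gives $\mathscr{H}^{M}(\phi)^{*}(c)=0$, and it remains to show $\mathscr{H}^{M}(\phi)^{*}$ is injective on these $\mathrm{Ext}^{1}$-groups. With $K := \ker(\mathscr{H}^{M}(\phi))$, the long exact $\mathrm{Ext}$ sequence reduces injectivity to surjectivity of $\mathrm{Hom}(\mathscr{H}^{M}(\Phi(A')), \mathscr{H}^{M-1}(F(B))) \to \mathrm{Hom}(K, \mathscr{H}^{M-1}(F(B)))$. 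All three sheaves are killed by the maximal ideals $\mathfrak{m}_{q_{i}}$, so $\mathcal{O}_{Y}$-Hom coincides stalkwise with $k$-linear Hom, and any subspace of a finite-dimensional $k$-vector space is a direct summand; this gives the required surjectivity, so $c=0$ and $\xi=0$. The main obstacle is organizing the decomposition and the $\mathrm{Ext}$-classification; once those are in place the final Hom-surjectivity step is automatic, because the residue-field structure of $\mathscr{Bla}^{M}$ propagates to all cohomology sheaves on $Y$.
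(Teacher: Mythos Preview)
Your argument is correct and follows the same core strategy as the paper: identify $\xi$ with a single class in $\mathrm{Ext}^{1}\bigl(\mathscr{H}^{M}(\Phi(A)),\,\mathscr{H}^{M-1}(F(B))\bigr)$ and then use that the surjection $\mathscr{H}^{M}(\Phi(A'))\twoheadrightarrow\mathscr{H}^{M}(\Phi(A))$ splits (both sides being finite sums of $k(q_{i})$'s) to deduce injectivity of pullback on $\mathrm{Ext}^{1}$.

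The one genuine difference is how you access $F(B)$. The paper invokes the already-constructed isomorphism $F(B)\cong\Phi(B)$ from step~II/III of Theorem~\ref{onebla} to split $F(B)$ outright as $\mathscr{H}^{M-1}[-M+1]\oplus\mathscr{H}^{M}[-M]$, so the $\mathrm{Ext}^{1}$-description of $\xi$ is immediate. You instead work with the canonical truncation triangle $\mathscr{H}^{M-1}(F(B))[-M+1]\to F(B)\to\mathscr{H}^{M}(F(B))[-M]\to$ and read off the $\mathrm{Ext}^{1}$-class from the resulting long exact Hom sequence. Your route is slightly longer but has the advantage of not using that $F(B)$ splits, hence not leaning on the isomorphism $s(B)$ whose well-definedness is precisely what this lemma is being used to establish; in that sense your version is a bit more self-contained. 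Your preliminary reduction to $A'$ locally free and the verification that $\mathscr{H}^{M-1}(F(B))$ is a sum of $k(q_{i})$'s are correct but not actually needed: the paper's argument works for arbitrary $A'$, and the splitting of $K\hookrightarrow\mathscr{H}^{M}(\Phi(A'))$ already gives surjectivity of $\mathrm{Hom}(-,\,T)$ regardless of the structure of the target $T$.
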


\begin{proof}
We know that
$$\Phi(A)\cong  p_{2*}(\text{Tor}^{1}(\mathscr{Bla}^{M},p_{1}^{*}A))[-M+1]\oplus p_{2*}(\mathscr{Bla}^{M}\otimes p_{1}^{*}A)[-M]$$
(since $\mathscr{Bla}^{M}$ is supported at a finite number of points and hence is flasque). Moreover, we know that $F(B)$ is isomorphic to $\Phi(B)$ by part III.  of Theorem \ref{onebla}, so we also know that
$$F(B)\cong  p_{2*}(\text{Tor}^{1}(\mathscr{Bla}^{M},p_{1}^{*}B))[-M+1]\oplus p_{2*}(\mathscr{Bla}^{M}\otimes p_{1}^{*}B)[-M]$$

Fix two isomorphisms as above. Now if we know that the given map $\Phi(A)\to F(B)$ is zero on cohomology, the map can be represented by a map
$$p_{2*}(\mathscr{Bla}^{M}\otimes p_{1}^{*}A)[-M] \to  p_{2*}(\text{Tor}^{1}(\mathscr{Bla}^{M},p_{1}^{*}B))[-M+1]$$
i.e. an element of $\text{Ext}^{1}(p_{2*}(\mathscr{Bla}^{M}\otimes p_{1}^{*}A), p_{2*}(\text{Tor}^{1}(\mathscr{Bla}^{M},p_{1}^{*}B)))$. Then it suffices to show that the map 
$$\text{Ext}^{1}(p_{2*}(\mathscr{Bla}^{M}\otimes p_{1}^{*}A), p_{2*}(\text{Tor}^{1}(\mathscr{Bla}^{M},p_{1}^{*}B))) \to \text{Ext}^{1}(p_{2*}(\mathscr{Bla}^{M}\otimes p_{1}^{*}A'), p_{2*}(\text{Tor}^{1}(\mathscr{Bla}^{M},p_{1}^{*}B)))$$
is injective.

But since $A'$ surjects onto $A$, we have a surjection $p_{2*}(A'\otimes \mathscr{Bla}^{M})\to p_{2*}(A\otimes \mathscr{Bla}^{M})$ and both of these sheaves are supported at the points $q_{1},\ldots,q_{t}$, hence this is a surjection of vector spaces and therefore it splits. Hence the map on $\text{Ext}^{1}$ above is injective.
\end{proof}

To complete the proof of Theorem \ref{generaliso} we still need to extend the isomorphism to the whole derived category $D^{b}_{\mathrm{Coh}}(X)$. This is straightforward in our case of $\text{dim}(X)=1$ thanks to the following:

\begin{lemma}\label{generality}
Let $X$, $Y$ be smooth projective varieties with $\mathrm{dim}(X)=1$. Consider two exact functors  $F,\Phi:D^{b}_{\mathrm{Coh}}(X)\to D^{b}_{\mathrm{Coh}}(Y)$, and assume that there exists an isomorphism of exact functors $s:\Phi\to F$ on the full subcategory of $D^{b}_{\mathrm{Coh}}(X)$ given by coherent sheaves on $X$ placed in degree zero. Then $s$ extends to an isomorphism of exact functors on the whole $D^{b}_{\mathrm{Coh}}(X)$.
\end{lemma}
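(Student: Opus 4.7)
The plan is to exploit the fact that $\mathrm{dim}(X) = 1$ makes $\mathrm{Coh}(X)$ hereditary, so every object of $D^{b}_{\mathrm{Coh}}(X)$ splits (non-canonically) as a direct sum of shifts of its cohomology sheaves. Because $X$ is a smooth curve, $\mathrm{Ext}^{i}_{X}(\mathscr{F},\mathscr{G}) = 0$ for all coherent $\mathscr{F},\mathscr{G}$ and all $i \geq 2$. I would prove by induction on cohomological amplitude that every $A \in D^{b}_{\mathrm{Coh}}(X)$ decomposes as $A \cong \bigoplus_{i} \mathscr{H}^{i}(A)[-i]$: letting $m$ be the smallest index with $\mathscr{H}^{m}(A) \neq 0$, the truncation triangle
$$\mathscr{H}^{m}(A)[-m] \to A \to \tau_{\geq m+1} A \to \mathscr{H}^{m}(A)[-m+1]$$
has connecting morphism in $\mathrm{Hom}(\tau_{\geq m+1} A, \mathscr{H}^{m}(A)[-m+1])$, which after the inductive decomposition of $\tau_{\geq m+1} A$ is a sum of groups $\mathrm{Ext}^{i-m+1}(\mathscr{H}^{i}(A), \mathscr{H}^{m}(A))$ for $i \geq m+1$; all vanish by hereditariness, so the triangle splits.

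Given this, for every $A \in D^{b}_{\mathrm{Coh}}(X)$ I fix an isomorphism $\phi_{A}: A \xrightarrow{\sim} \bigoplus_{i} \mathscr{H}^{i}(A)[-i]$ (taking $\phi_{A} = \mathrm{id}$ when $A$ is a coherent sheaf concentrated in degree zero, and arranging $\phi_{A[1]} = \phi_{A}[1]$), and define
$$\tilde{s}(A) := F(\phi_{A})^{-1} \circ \Bigl( \bigoplus_{i} s(\mathscr{H}^{i}(A))[-i] \Bigr) \circ \Phi(\phi_{A}) : \Phi(A) \to F(A).$$
By construction $\tilde{s}$ agrees with $s$ on coherent sheaves concentrated in degree zero and commutes with the shift functor.

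The main step is naturality: for each morphism $f: A \to B$ in $D^{b}_{\mathrm{Coh}}(X)$, the identity $\tilde{s}(B) \circ \Phi(f) = F(f) \circ \tilde{s}(A)$ must hold. Conjugating by the chosen splittings, $f$ is represented by a matrix of morphisms $f_{ij} \in \mathrm{Hom}(\mathscr{H}^{i}(A)[-i], \mathscr{H}^{j}(B)[-j]) = \mathrm{Ext}^{j-i}_{X}(\mathscr{H}^{i}(A), \mathscr{H}^{j}(B))$. By hereditariness, only components with $j = i$ or $j = i + 1$ can be nonzero. The $j = i$ components are honest morphisms of coherent sheaves, for which compatibility is the given naturality of $s$ on $\mathrm{Coh}(X)$. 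The $j = i+1$ components are extension classes, each representable by a short exact sequence of coherent sheaves on $X$; compatibility of $s$ with the associated connecting morphism is part of the hypothesis that $s$ is an isomorphism of exact functors on the subcategory of coherent sheaves. Together, these give naturality of $\tilde{s}$. Specializing to $A = B$ and $f = \mathrm{id}_{A}$ computed relative to two different splittings shows that $\tilde{s}(A)$ is independent of the choice of $\phi_{A}$.

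Once naturality and compatibility with shifts are established, $\tilde{s}$ is automatically a morphism of triangulated (exact) functors: applied to any distinguished triangle it yields a commutative ladder between the $\Phi$- and $F$-triangles. The hard part will be controlling the off-diagonal $\mathrm{Ext}^{1}$ components that appear after decomposition, both in morphisms $f: A \to B$ and in the automorphisms-fixing-cohomology that measure the ambiguity of $\phi_{A}$; these are precisely handled by the exact-functor (triangle-respecting) property of $s$ on short exact sequences of coherent sheaves.
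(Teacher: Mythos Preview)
Your approach is correct and essentially identical to the paper's: both exploit the hereditary splitting $A \cong \bigoplus_i \mathscr{H}^i(A)[-i]$ (the paper simply cites Dold for this), define the extended $s$ via a fixed choice of splitting, and reduce naturality for an arbitrary morphism to its degree-$0$ and degree-$1$ components between sheaves, handling the latter via the triangle-compatibility hypothesis applied to the corresponding short exact sequence. Your write-up is somewhat more explicit about independence from the choice of $\phi_A$ and about shift-compatibility, points the paper leaves implicit or absorbs into the phrase ``compatible with shifting.''
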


\begin{proof}
Consider a complex $C^{\bullet} \in D^{b}_{\mathrm{Coh}}(X)$. Then by \cite{dold} $C^{\bullet}\cong \oplus H^{i}(C^{\bullet})[-i]$, in a non-canonical way. Choose one such isomorphism for each $C^{\bullet}$. By Theorem \ref{onebla}, since  both functors are compatible with shifting, we immediately get an isomorphism $s(C^{\bullet}):\Phi(C^{\bullet})\to F(C^{\bullet})$.

Now consider a map $C^{\bullet} \to D^{\bullet}$. This is the same as a map $ \oplus H^{i}(C^{\bullet})[-i]\to  \oplus H^{i}(D^{\bullet})[-i] $, and again since the two functors are compatible with shifting, and $X$ has dimension 1, it is enough to show that $s(-)$ is compatible with maps $\F\to \G$ and $\F\to \G[1]$, where $\F$ and $\G$ are sheaves. The first case follows from the fact that $s$ is an isomorphism of exact functors on $\mathrm{Coh}(X)$. A map $\alpha: \F \to \G[1]$ corresponds to an element in $\text{Ext}^{1}(\F, \G)$ so we have a short exact sequence
$$0\to \G \to \mathscr{H} \to \F \to 0$$
and by Theorem \ref{onebla} we get an isomorphism of triangles 
$$\xymatrix{
\Phi(\G) \ar[r] \ar[d]^{s(\G)}& \Phi(\mathscr{H}) \ar[r] \ar[d]^{s(\mathscr{H})} &\Phi(\F) \ar[d]^{s(\F)} \ar[r]^-{\Phi(\alpha)} & \Phi(\G)[1]	\ar[d]^{s(\G)[1]}\\
\F(\G) \ar[r] 				& F(\mathscr{H}) \ar[r]					&F(\F) \ar[r]^-{F(\alpha)}					& F(\G)[1]
}$$
hence $s$ is compatible with $\alpha$. The fact that $s$ is compatible with triangles is immediate.
\end{proof}

\begin{proof}[Proof of Theorem \ref{generaliso}]
This follows immediately from Theorem \ref{onebla} and Lemma \ref{generality}.
\end{proof}


\begin{remark}
Notice that any functor satisfying the hypotheses of Theorem \ref{generaliso} will not be full and will not satisfy
$$\mathrm{Hom}_{D^{b}_{\mathrm{Coh}}(Y)} (F(\mathscr{F},\mathscr{G}[j])=0 \text{ if } j<0$$ 
for all $\mathscr{F}, \mathscr{G} \in \Ol_{X}$ (take for example $\mathscr{F}$ to be supported at one of the $p_{i}$'s). Hence this improves the result of \cite{twisted}.
\end{remark}

\section{A Spectral Sequence}\label{ss}

Even when we don't know how to build a kernel out of the sheaves $\mathscr{Bla}^{i}$ that we constructed in Theorem \ref{all blas}, these sheaves still satisfy some good properties. As an example, we will show that under the same hypotheses of Theorem \ref{all blas} the analogue of the Cartan-Eilenberg Spectral Sequence converges when the dimension of $X$ is one, whereas the sequence of low degree terms is exact for any $X$, $Y$.

Let $X$, $Y$ be smooth projective varieties over an algebraically closed fiend, and consider a Fourier-Mukai functor $\Phi_{E}$ with $E\in D^{b}_{\mathrm{Coh}}(X\times Y)$. Then for each locally free sheaf $\mathscr{E}\in \text{Coh}(X)$ the Cartan-Eilenberg Spectral Sequence gives
$$E_{2}^{pq}=R^{p}p_{2*}(\mathscr{H}^{q}(E)\otimes p_{1}^{*} \mathscr{E}) \Rightarrow \mathscr{H}^{p+q}(\Phi_{E}(\mathscr{E}))$$

Now consider an exact functor $F: D^{b}_{\mathrm{Coh}}(X) \to D^{b}_{\mathrm{Coh}}(Y)$. Suppose we computed the cohomology sheaves $\mathscr{Bla}^{i}$ of the prospective kernel in $D^{b}_{\mathrm{Coh}}(X\times Y)$ as in Theorem \ref{all blas}. Then we can replace $\mathscr{H}^{q}(E)$ with $\mathscr{B}^{q}$ in the above and set 
$$E_{2}^{pq}=R^{p}p_{2*}(\mathscr{B}^{q}\otimes p_{1}^{*} \mathscr{E})$$
for any locally free sheaf $\mathscr{E}$ on $X$. The corresponding sequence of low degree terms is exact:

\begin{proposition}\label{ss1}
Let $X$, $Y$ be smooth projective varieties over an algebraically closed field $k$, $F: D^{b}_{\mathrm{Coh}}(X) \to D^{b}_{\mathrm{Coh}}(Y)$ an exact functor. Assume that $F(\mathscr{E})\in D^{[M,N]}_{Coh}(Y)$ for all locally free sheaves $\mathscr{E}$ of finite rank on $X$. Let $\mathscr{Bla}^{i}$ be the sheaves computed in Theorem \ref{all blas}. Then for any locally free sheaf $\mathscr{E}$ of finite rank on $X$ the following sequence is exact:
\begin{align*}
0 &\to R^{1}p_{2*}(\mathscr{Bla}^{M}\otimes p_{1}^{*} \mathscr{E}) \to \mathscr{H}^{M+1}(F(\mathscr{E})) \to\\
&\to p_{2*}(\mathscr{Bla}^{M+1}\otimes p_{1}^{*} \mathscr{E})  \to R^{2}p_{2*}(\mathscr{Bla}^{M}\otimes p_{1}^{*} \mathscr{E}) \to \mathscr{H}^{M+2}(F(\mathscr{E})) 
\end{align*}
\end{proposition}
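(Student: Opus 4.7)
The strategy is to mimic the construction of the five-term exact sequence of the Cartan--Eilenberg (Grothendieck) spectral sequence for a ``virtual'' kernel, replacing it with direct comparisons via Theorem \ref{all blas} and Proposition \ref{bla_for_free}. The key tool is the Koszul-type short exact sequence
$$0\to \mathscr{F}\to \mathscr{F}(m)^{\oplus(d+1)}\to K_m\otimes\mathscr{F}\to 0$$
already used in Proposition \ref{bla_for_free}, applied to a locally free sheaf $\mathscr{F}$ and with $m$ chosen large enough that (a) by Theorem \ref{all blas}, the comparison maps are isomorphisms in every cohomological degree on $\mathscr{F}(m)^{\oplus(d+1)}$; and (b) by relative Serre vanishing for the $p_2$-ample sheaf $p_1^*\mathscr{O}_X(1)$, the higher direct images $R^qp_{2*}(\mathscr{Bla}^i\otimes p_1^*\mathscr{F}(m)^{\oplus(d+1)})$ vanish for all $q>0$ and all $i$.

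Applying $\mathscr{H}^\bullet(F(-))$ and $R^\bullet p_{2*}(\mathscr{Bla}^j\otimes p_1^*(-))$ for $j=M, M+1$ to the Koszul sequence gives three long exact sequences, linked into a commutative ladder by the natural transformation of Theorem \ref{all blas}; the comparison map is an isomorphism in degree $M$ on every locally free sheaf appearing, by Proposition \ref{bla_for_free}, while (a)--(b) collapse the $\mathscr{Bla}^j$-LES at the middle column. A diagram chase on the controlled portion of the ladder proves the first half of the sequence
$$0\to R^1p_{2*}(\mathscr{Bla}^M\otimes p_1^*\mathscr{F})\to \mathscr{H}^{M+1}(F(\mathscr{F}))\to p_{2*}(\mathscr{Bla}^{M+1}\otimes p_1^*\mathscr{F})$$
for \emph{every} locally free $\mathscr{F}$: one identifies $R^1p_{2*}(\mathscr{Bla}^M\otimes p_1^*\mathscr{F})$ with the cokernel of $\mathscr{H}^M(F(\mathscr{F}(m)^{\oplus(d+1)}))\to \mathscr{H}^M(F(K_m\otimes\mathscr{F}))$ (using Proposition \ref{bla_for_free}) and then with the kernel of $\mathscr{H}^{M+1}(F(\mathscr{F}))\to \mathscr{H}^{M+1}(F(\mathscr{F}(m)^{\oplus(d+1)}))\cong p_{2*}(\mathscr{Bla}^{M+1}\otimes p_1^*\mathscr{F}(m)^{\oplus(d+1)})$ via the connecting map of the $F$-LES. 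Exactness at $\mathscr{H}^{M+1}$ then reduces to injectivity of $p_{2*}(\mathscr{Bla}^{M+1}\otimes p_1^*\mathscr{F})\to p_{2*}(\mathscr{Bla}^{M+1}\otimes p_1^*\mathscr{F}(m)^{\oplus(d+1)})$, which comes from the $\mathscr{Bla}^{M+1}$-LES.

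Specializing this first-half result to $\mathscr{F}=K_m\otimes\mathscr{E}$ yields the crucial identification
$$\ker\bigl(\mathscr{H}^{M+1}(F(K_m\otimes\mathscr{E}))\to p_{2*}(\mathscr{Bla}^{M+1}\otimes p_1^*(K_m\otimes\mathscr{E}))\bigr)\cong R^1p_{2*}(\mathscr{Bla}^M\otimes p_1^*(K_m\otimes\mathscr{E}))\cong R^2p_{2*}(\mathscr{Bla}^M\otimes p_1^*\mathscr{E}),$$
the last isomorphism coming again from (b) applied to the $\mathscr{Bla}^M$-LES. Using this, I would construct the $d_2$-type boundary map $p_{2*}(\mathscr{Bla}^{M+1}\otimes p_1^*\mathscr{E})\to R^2p_{2*}(\mathscr{Bla}^M\otimes p_1^*\mathscr{E})$ by lifting $\alpha\in p_{2*}(\mathscr{Bla}^{M+1}\otimes p_1^*\mathscr{E})$ along (a) to $\mathscr{H}^{M+1}(F(\mathscr{E}(m)^{\oplus(d+1)}))$, pushing to $\mathscr{H}^{M+1}(F(K_m\otimes\mathscr{E}))$, and observing that its image in $p_{2*}(\mathscr{Bla}^{M+1}\otimes p_1^*(K_m\otimes\mathscr{E}))$ vanishes by exactness of the $\mathscr{Bla}^{M+1}$-LES, so that it lies in the kernel identified above. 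Exactness at $p_{2*}(\mathscr{Bla}^{M+1}\otimes p_1^*\mathscr{E})$ and at $R^2p_{2*}(\mathscr{Bla}^M\otimes p_1^*\mathscr{E})$, and independence of the choice of $m$, then reduce to a diagram chase using the connecting map $\mathscr{H}^{M+1}(F(K_m\otimes\mathscr{E}))\to \mathscr{H}^{M+2}(F(\mathscr{E}))$ of the $F$-LES.

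The hard part is that the comparison map $\mathscr{H}^{M+1}(F(K_m\otimes\mathscr{E}))\to p_{2*}(\mathscr{Bla}^{M+1}\otimes p_1^*(K_m\otimes\mathscr{E}))$ is not an isomorphism in general---indeed, measuring its failure to be an isomorphism is precisely what the first half of the five-term sequence applied to $K_m\otimes\mathscr{E}$ accomplishes. The proof circumvents this by first proving the first half of the sequence for all locally free sheaves at once, so that it is already available when specialized to $K_m\otimes\mathscr{E}$, and only then bootstrapping to construct the $d_2$-map and verify the remaining exactness.
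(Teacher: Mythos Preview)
Your proposal is correct and follows essentially the same approach as the paper: both use the Koszul-type sequence $0\to\mathscr{E}\to\mathscr{E}(m)^{\oplus(d+1)}\to K_m\otimes\mathscr{E}\to 0$ with $m$ large enough for Serre vanishing and the comparison isomorphisms of Theorem~\ref{all blas}, invoke Proposition~\ref{bla_for_free} for the degree-$M$ identifications, establish the first three-term piece for all locally free sheaves by a diagram chase, and then specialize to $K_m\otimes\mathscr{E}$ together with the isomorphism $R^1p_{2*}(\mathscr{Bla}^M\otimes p_1^*(K_m\otimes\mathscr{E}))\cong R^2p_{2*}(\mathscr{Bla}^M\otimes p_1^*\mathscr{E})$ to bootstrap the second half. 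Your write-up is in fact somewhat more explicit than the paper's about how the $d_2$-type map is built and why the chase goes through.
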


\begin{proof}
Assume that there is an embedding $X\to \pr^{d}_{k}$. Then for every $m>0$ we have a short exact sequence
$0\to \Ol_{X}\to \Ol_{X}(m)^{\oplus(d+1)}\to K_{m}\to 0$, where $K_{m}$ is a locally free sheaf. 

Let $\mathscr{E}$ be a locally free sheaf of finite rank on $X$. Then by tensoring the sequence above with $\mathscr{E}$ we get a short exact sequence
$$0\to \mathscr{E}\to \mathscr{E}(m)^{\oplus(d+1)}\to K_{m}\otimes \mathscr{E}\to 0\mathrm{.}$$
Choose $m$ high enough so that $R^{1}p_{2*}(\mathscr{B}^{M}\otimes p_{1}^{*}(\mathscr{E}(m)))=0$ and that $\mathscr{H}^{M+1}(F(\mathscr{E}(m)))\cong p_{2*}(\mathscr{B}^{M+1}\otimes p_{1}^{*}\mathscr{E})$. By applying the functor $F$ and then taking cohomology we get a long exact sequence
$$0\to \mathscr{H}^{M}(F(\mathscr{E})) \to  \mathscr{H}^{M}(F(\mathscr{E}(m)))^{\oplus(d+1)} \to  \mathscr{H}^{M}(F(K_{m}\otimes \mathscr{E})) \to  \mathscr{H}^{M+1}(F(\mathscr{E}))\to\ldots$$
By Proposition \ref{bla_for_free}, for any locally free sheaf of finite rank $\mathscr{F}$ we have a functorial isomorphism
$$\mathscr{H}^{M}(F(\mathscr{F})) \xrightarrow{\cong} p_{2*}(\mathscr{B}^{M}\otimes p_{1}^{*}\mathscr{F})\mathrm{.}$$
Then we get the following diagram:
$$\xymatrix@C=5mm{
\ldots \ar[r] & \mathscr{H}^{M}(F(\mathscr{E}(m)))^{\oplus(d+1)} \ar[r]\ar[d]^{\cong}  &			\mathscr{H}^{M}(F(K_{m}\otimes \mathscr{E})) \ar[r]\ar[d]^{\cong} &  \mathscr{H}^{M+1}(F(\mathscr{E})) \ar[r] &\ldots\\
\ldots\ar[r] &p_{2*}(\mathscr{B}^{M}\otimes p_{1}^{*}\mathscr{E}(m)^{\oplus(d+1)}) \ar[r] & p_{2*}(\mathscr{B}^{M}\otimes p_{1}^{*}(K_{m}\otimes\mathscr{E})) \ar[r]  & R^{1}p_{2*}(\mathscr{B}^{M}\otimes p_{1}^{*}\mathscr{E}) \ar[r] & 0
}$$
so there exists a map
$$R^{1}p_{2*}(\mathscr{B}^{M}\otimes p_{1}^{*}\mathscr{E}) \to \mathscr{H}^{M+1}(F(\mathscr{E}))\mathrm{.} $$
By Theorem \ref{all blas} we also have a map $\mathscr{H}^{M+1}(F(\mathscr{E})) \to p_{2*}(\mathscr{B}^{M+1}\otimes p_{1}^{*}\mathscr{E})$. The fact that the sequence 
$$0\to R^{1}p_{2*}(\mathscr{B}^{M}\otimes p_{1}^{*}\mathscr{E}) \to \mathscr{H}^{M+1}(F(\mathscr{E})) \to p_{2*}(\mathscr{B}^{M+1}\otimes p_{1}^{*}\mathscr{E})$$
is exact follows from diagram chasing. This is the first part of our sequence.

Now since the sequence above is exact for any $\mathscr{E}$ locally free sheaf of finite rank on $X$, it will also be exact for $K_{m}\otimes \mathscr{E}$. So we have the following diagram:
\vskip-5mm
$$\xymatrix@C=3mm@R=5mm{& & & 0\ar[d]\\
 & &  & R^{1}p_{2*}(\mathscr{Bla}^{M}\otimes p_{1}^{*}(K_{m}\otimes \mathscr{E})) \ar[d] \\
\ldots \ar[r] &  \mathscr{H}^{M+1}(F(\mathscr{E})) \ar[r]\ar[d] & \mathscr{H}^{M+1}(F(\mathscr{E}(m)))^{\oplus (d+1)} \ar[r]\ar[d]^{\cong} &\mathscr{H}^{M+1}(F(K_{m}\otimes \mathscr{E}))\ar[d] \ar[r] &\ldots \\
\ldots \ar[r] & p_{2*}(\mathscr{Bla}^{M+1}\otimes p_{1}^{*}\mathscr{E}) \ar[r] &  p_{2*}(\mathscr{Bla}^{M+1}\otimes p_{1}^{*}\mathscr{E}(m)^{\oplus(d+1)}) \ar[r] & p_{2*}(\mathscr{Bla}^{M+1}\otimes p_{1}^{*}(K_{m}\otimes \mathscr{E})) \ar[r] &\ldots
}$$
by diagram chasing we get a map 
$$p_{2*}(\mathscr{Bla}^{M+1}\otimes p_{1}^{*}\mathscr{E}) \to R^{1}p_{2*}(\mathscr{Bla}^{M}\otimes p_{1}^{*}(K_{m}\otimes \mathscr{E}))$$
This has an obvious map to $\mathscr{H}^{M+2}(F(\mathscr{E}))$ given by the composition 
$$R^{1}p_{2*}(\mathscr{Bla}^{M}\otimes p_{1}^{*}(K_{m}\otimes \mathscr{E})) \to \mathscr{H}^{M+1}(F(K_{m}\otimes \mathscr{E})) \to \mathscr{H}^{M+2}(F(\mathscr{E}))$$
but since $R^{1}p_{2*}(\mathscr{Bla}^{M}\otimes p_{1}^{*} \mathscr{E}(m))=R^{2}p_{2*}(\mathscr{Bla}^{M}\otimes p_{1}^{*} \mathscr{E}(m))=0$, we know that
$$R^{1}p_{2*}(\mathscr{Bla}^{M}\otimes p_{1}^{*}(K_{m}\otimes \mathscr{E}))\cong R^{2}p_{2*}(\mathscr{Bla}^{M}\otimes p_{1}^{*}\mathscr{E})$$
this gives the second part of our sequence,
$$p_{2*}(\mathscr{Bla}^{M+1}\otimes p_{1}^{*}\mathscr{E}) \to R^{2}p_{2*}(\mathscr{Bla}^{M}\otimes p_{1}^{*}\mathscr{E}) \to \mathscr{H}^{M+2}(F(\mathscr{E}))$$
Exactness of the whole sequence again follows by diagram chasing.
\end{proof}

\begin{proposition}
In the setting of Proposition \ref{ss1}, assume $\mathrm{dim}(X)=1$. Then for all locally free sheaves of finite rank $\mathscr{E}$ on $X$ there is a spectral sequence
$$E_{2}^{pq}=R^{p}p_{2*}(\mathscr{Bla}^{q}\otimes p_{1}^{*} \mathscr{E})\Rightarrow \mathscr{H}^{p+q}(F(\mathscr{E}))$$
\end{proposition}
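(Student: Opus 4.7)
The plan is to observe that in dimension one the prospective spectral sequence collapses at $E_2$ and then to extract the required short exact sequences directly from the technique used in Proposition \ref{ss1}.

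First, since $\mathrm{dim}(X)=1$ the projection $p_{2}:X\times Y\to Y$ has one-dimensional fibres, so for any coherent sheaf $\mathscr{F}$ on $X\times Y$ we have $R^{p}p_{2*}\mathscr{F}=0$ for $p\geq 2$. Hence on the alleged $E_{2}$ page only the columns $p=0$ and $p=1$ can be nonzero. Every higher differential $d_{r}:E_{r}^{pq}\to E_{r}^{p+r,q-r+1}$ with $r\geq 2$ has target sitting in a column $p+r\geq 2$, so all such differentials vanish and the sequence must degenerate at $E_{2}$. Convergence to $\mathscr{H}^{p+q}(F(\mathscr{E}))$ therefore amounts to exhibiting, for every $n\in\Z$ and every locally free sheaf of finite rank $\mathscr{E}$ on $X$, a short exact sequence
$$0\to R^{1}p_{2*}(\mathscr{Bla}^{n-1}\otimes p_{1}^{*}\mathscr{E})\to \mathscr{H}^{n}(F(\mathscr{E}))\to p_{2*}(\mathscr{Bla}^{n}\otimes p_{1}^{*}\mathscr{E})\to 0,$$
functorial in $\mathscr{E}$.

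Now this sequence is, up to relabelling, precisely what Proposition \ref{ss1} produces in degree $n=M+1$: because $R^{2}p_{2*}=0$ in our setting, the five-term sequence stated there truncates on the right to a short exact sequence. The same proof—take a short exact sequence $0\to \mathscr{E}\to \mathscr{E}(m)^{\oplus(d+1)}\to K_{m}\otimes\mathscr{E}\to 0$ with $m\gg 0$, apply $F$, and compare the resulting long cohomology sequence with the one obtained by tensoring $\mathscr{Bla}^{\bullet}$ with $p_{1}^{*}$ of the same sequence and pushing forward, using Theorem \ref{all blas} to identify the middle vertical arrows as isomorphisms—produces the analogous short exact sequence in each degree $n>M$. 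For $n=M$ we use Proposition \ref{bla_for_free} to identify $\mathscr{H}^{M}(F(\mathscr{E}))$ with $p_{2*}(\mathscr{Bla}^{M}\otimes p_{1}^{*}\mathscr{E})$, while the $R^{1}$ term vanishes since $\mathscr{Bla}^{M-1}=0$; for $n<M$ and $n>N$ both sides vanish.

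Once these short exact sequences are in hand, the spectral sequence itself is the trivial two-column collapsing one whose $E_{2}$ page is prescribed and whose $E_{\infty}=E_{2}$ abuts to $\mathscr{H}^{p+q}(F(\mathscr{E}))$ via the described two-step filtration. The main obstacle is to check that the sub-object $R^{1}p_{2*}(\mathscr{Bla}^{n-1}\otimes p_{1}^{*}\mathscr{E})\hookrightarrow \mathscr{H}^{n}(F(\mathscr{E}))$ produced by the diagram chase in Proposition \ref{ss1} is independent of the auxiliary twist $m$ and functorial in $\mathscr{E}$; this can be verified by comparing two choices $m,m'$ through the diagram associated to the surjection $\mathscr{E}(\min(m,m'))\to \mathscr{E}(\max(m,m'))^{\oplus(d+1)}$ and applying the snake lemma, or equivalently by identifying the $R^{1}$ term intrinsically as the cokernel of the canonical map $\mathscr{H}^{n-1}(F(K_{m}\otimes\mathscr{E}))\to \mathscr{H}^{n}(F(\mathscr{E}))$ in the limit.
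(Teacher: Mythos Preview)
Your outline is correct: since $\dim X=1$ forces $R^{p}p_{2*}=0$ for $p\geq 2$, the spectral sequence degenerates at $E_{2}$ and the content of the statement is the family of short exact sequences you write down. The gap is in the sentence ``the same proof\ldots produces the analogous short exact sequence in each degree $n>M$.'' The proof of Proposition~\ref{ss1} does not transport verbatim to higher $n$: that argument relies, for the vertical arrow at $K_{m}\otimes\mathscr{E}$, on Proposition~\ref{bla_for_free}, which gives an isomorphism $\mathscr{H}^{M}(F(K_{m}\otimes\mathscr{E}))\cong p_{2*}(\mathscr{Bla}^{M}\otimes p_{1}^{*}(K_{m}\otimes\mathscr{E}))$ valid only in the lowest degree $M$. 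For $n-1>M$ you have no such statement a priori; Theorem~\ref{all blas} only gives it after a further twist, and since $K_{m}$ itself depends on $m$ you cannot absorb that twist into a single choice of $m$. Without control of the vertical map at $K_{m}\otimes\mathscr{E}$, the diagram chase produces neither the map $R^{1}p_{2*}(\mathscr{Bla}^{n-1}\otimes p_{1}^{*}\mathscr{E})\to\mathscr{H}^{n}(F(\mathscr{E}))$ nor its injectivity.

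The paper closes this gap by an induction on $n$. The key observation you are missing is that, from the pushforward long exact sequence and $R^{2}p_{2*}=0$, one has $R^{1}p_{2*}(\mathscr{Bla}^{i}\otimes p_{1}^{*}(K_{m}\otimes\mathscr{E}))\cong R^{2}p_{2*}(\mathscr{Bla}^{i}\otimes p_{1}^{*}\mathscr{E})=0$ for all $i$. Hence if the short exact sequence is already known in degree $n-1$ for \emph{every} locally free sheaf, applying it to $K_{m}\otimes\mathscr{E}$ collapses to an isomorphism $\mathscr{H}^{n-1}(F(K_{m}\otimes\mathscr{E}))\cong p_{2*}(\mathscr{Bla}^{n-1}\otimes p_{1}^{*}(K_{m}\otimes\mathscr{E}))$, which is exactly the replacement for Proposition~\ref{bla_for_free} needed to run the diagram chase in degree $n$. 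One then obtains the left-exact sequence in degree $n$; surjectivity on the right is a separate step, obtained by feeding the newly established injectivity (for $K_{m}\otimes\mathscr{E}$ in degree $n$) back into a second diagram. Your sketch should make this inductive structure explicit rather than asserting that Proposition~\ref{ss1} applies uniformly.
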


\begin{proof}
Since dim $X=1$, the only nonzero terms of the spectral sequence are $E_{2}^{0,q}$ and $E_{2}^{1,q}$. Therefore all the differentials are zero and to show that the spectral sequence converges we need to show:
\begin{itemize}
\item There exists a map  $E_{2}^{1,q-1} = R^{1}p_{2*}(\mathscr{Bla}^{q-1}\otimes p_{1}^{*} \mathscr{E}) \hookrightarrow \mathscr{H}^{q}(F(\mathscr{E}))$
\item $E_{2}^{0,q}= p_{2*}(\mathscr{Bla}^{q}\otimes p_{1}^{*} \mathscr{E}) \cong \mathscr{H}^{q}(F(\mathscr{E}))/R^{1}p_{2*}(\mathscr{Bla}^{q-1}\otimes p_{1}^{*} \mathscr{E})$.
\end{itemize}

Since dim $X=1$ we have $R^{2}p_{2*}(\mathscr{Bla}^{q}\otimes p_{1}^{*} \mathscr{E})=0$. Therefore the exact sequence of Proposition \ref{ss1} becomes a short exact sequence
\begin{equation}\label{ss2}
0\to R^{1}p_{2*}(\mathscr{Bla}^{M}\otimes p_{1}^{*} \mathscr{E}) \to \mathscr{H}^{M+1}(F(\mathscr{E})) \to p_{2*}(\mathscr{Bla}^{M+1}\otimes p_{1}^{*} \mathscr{E})  \to 0\mathrm{.}
\end{equation}
Choose $m$ high enough so that $R^{p}p_{2*}(\mathscr{B}^{q}\otimes p_{1}^{*}(\mathscr{E}(m)))=0$ for all $q$ and all $p>0$ (this can be done because for $m$ high enough $p_{1}^{*}\Ol_{X}(1)$ is very ample with respect to $X\times Y\to Y$), and such that $ \mathscr{H}^{i}(F(\mathscr{E}(m)))\cong p_{2*}(\mathscr{Bla}^{i}\otimes p_{1}^{*}\mathscr{E}(m))$ for all $i$ (this can be done by Theorem \ref{all blas}). Then using again the short exact sequence in the proof of Proposition \ref{ss1}
$$0\to \mathscr{E}\to \mathscr{E}(m)^{\oplus (d+1)} \to K_{m}\otimes \mathscr{E}\to 0\mathrm{,}$$
we get that
$$R^{1}p_{2*}(\mathscr{Bla}^{i}\otimes p_{1}^{*} (\mathscr{E}\otimes K_{m})) \cong R^{2}p_{2*}(\mathscr{Bla}^{i}\otimes p_{1}^{*} \mathscr{E})=0$$
for all $i$.

Now assume by induction that we get the same short exact sequence as (\ref{ss2}) starting with $\mathscr{Bla}^{M+n-1}$ for any locally free sheaf of finite rank $\mathscr{E}$:
$$0\to R^{1}p_{2*}(\mathscr{Bla}^{M+n-1}\otimes p_{1}^{*} \mathscr{E}) \to \mathscr{H}^{M+n}(F(\mathscr{E})) \to p_{2*}(\mathscr{Bla}^{M+n}\otimes p_{1}^{*} \mathscr{E})  \to 0$$
then the same exact sequence will hold if we substitute $\mathscr{E}$ with $K_{m}\otimes \mathscr{E}$:
$$0\to R^{1}p_{2*}(\mathscr{Bla}^{M+n-1}\otimes p_{1}^{*} (K_{m}\otimes \mathscr{E})) \to \mathscr{H}^{M+n}(F(K_{m}\otimes \mathscr{E})) \to p_{2*}(\mathscr{Bla}^{M+n}\otimes p_{1}^{*} (K_{m}\otimes \mathscr{E}))  \to 0\mathrm{.}$$
But since $R^{1}p_{2*}(\mathscr{Bla}^{i}\otimes p_{1}^{*} (\mathscr{E}\otimes K_{m}))=0$ this gives an isomorphism 
$$ \mathscr{H}^{M+n}(F(K_{m}\otimes \mathscr{E})) \cong p_{2*}(\mathscr{Bla}^{M+n}\otimes p_{1}^{*} (K_{m}\otimes \mathscr{E}))\mathrm{.} $$
Hence from the diagram
$$\xymatrix@C=2mm{ 
\ldots \ar[r] & \mathscr{H}^{M+n}(F(K_{m}\otimes \mathscr{E})) \ar[rrrr]\ar[d]^{\cong} & &&&\mathscr{H}^{M+n+1}(F(\mathscr{E})) \ar[d]\ar[r] &\ldots \\
\ldots\ar[r] & p_{2*}(\mathscr{Bla}^{M+n}\otimes p_{1}^{*}(K_{m}\otimes \mathscr{E})) \ar[r] & R^{1}p_{2*}(\mathscr{Bla}^{M+n}\otimes p_{1}^{*}\mathscr{E}) \ar[r] &0&0\ar[r] & p_{2*}(\mathscr{Bla}^{M+n+1}\otimes p_{1}^{*}\mathscr{E})\ar[r]& \ldots
}$$
we get a sequence
\begin{equation}\label{surjective}
0\to R^{1}p_{2*}(\mathscr{Bla}^{M+n}\otimes p_{1}^{*} \mathscr{E}) \to \mathscr{H}^{M+n+1}(F(\mathscr{E}))\to p_{2*}(\mathscr{Bla}^{M+n+1}\otimes p_{1}^{*} \mathscr{E})\mathrm{,}
\end{equation}
which is exact by diagram chasing. Again, we also have the corresponding exact sequence for the locally free sheaf $K_{m}\otimes \mathscr{E}$:
$$0\to R^{1}p_{2*}(\mathscr{Bla}^{M+n}\otimes p_{1}^{*} (K_{m}\otimes\mathscr{E})) \to \mathscr{H}^{M+n+1}(F(K_{m}\otimes\mathscr{E}))\to p_{2*}(\mathscr{Bla}^{M+n+1}\otimes p_{1}^{*} (K_{m}\otimes \mathscr{E}))$$
and the first term of the sequence is zero, i.e. the map $\mathscr{H}^{M+n+1}(F(K_{m}\otimes\mathscr{E}))\to p_{2*}(\mathscr{Bla}^{M+n+1}\otimes p_{1}^{*} (K_{m}\otimes \mathscr{E}))$ is injective. This is reflected in the following diagram:
$$\xymatrix@C=1mm{ 
\ldots \ar[r] & \mathscr{H}^{M+n+1}(F(\mathscr{E})) \ar[r]\ar[d] & \mathscr{H}^{M+n+1}(F(\mathscr{E}(m)))^{\oplus (d+1)}\ar[d]^{\cong}\ar[r] & \mathscr{H}^{M+n+1}(F(K_{m}\otimes \mathscr{E})) \ar[r]\ar@{^{(}->}[d] &\ldots   \\
0\ar[r] & p_{2*}(\mathscr{Bla}^{M+n+1}\otimes p_{1}^{*}\mathscr{E}) \ar[r] & p_{2*}(\mathscr{Bla}^{M+n+1}\otimes p_{1}^{*}\mathscr{E}(m)^{\oplus (d+1)})\ar[r]  & 
p_{2*}(\mathscr{Bla}^{M+n+1}\otimes p_{1}^{*}(K_{m}\otimes \mathscr{E})) \ar[r] & \ldots
}$$
By diagram chasing this tells us that the map 
$$\mathscr{H}^{M+n+1}(F(\mathscr{E}))\to p_{2*}(\mathscr{Bla}^{M+n+1}\otimes p_{1}^{*} \mathscr{E})$$
is actually surjective, hence (\ref{surjective}) becomes a short exact sequence, and this completes the proof.
\end{proof}

\bibliography{refs}{}

\providecommand{\bysame}{\leavevmode\hbox to3em{\hrulefill}\thinspace}
\providecommand{\MR}{\relax\ifhmode\unskip\space\fi MR }
\providecommand{\MRhref}[2]{%
  \href{http://www.ams.org/mathscinet-getitem?mr=#1}{#2}
}
\providecommand{\href}[2]{#2}
\begin{thebibliography}{BVdB03}

\bibitem[BVdB03]{generators}
A.~Bondal and M.~Van~den Bergh, \emph{Generators and representability of
  functors in commutative and noncommutative geometry}, Mosc. Math. J.
  \textbf{3} (2003), no.~1, 1--36.

\bibitem[COS11]{full}
A.~Canonaco, D.~O. Orlov, and P.~Stellari, \emph{Does full imply faithful?}, J.
  Noncommut. Geom. (2011), to appear, preprint available at
  http://arxiv.org/abs/1101.5931v2.

\bibitem[CS07]{twisted}
A.~Canonaco and P.~Stellari, \emph{Twisted {F}ourier-{M}ukai functors}, Adv.
  Math. \textbf{212} (2007), no.~2, 484--503.

\bibitem[Dol60]{dold}
A.~Dold, \emph{Zur {H}omotopietheorie der {K}ettenkomplexe}, Mathematische
  Annalen \textbf{140} (1960), 278--298.

\bibitem[Gro61]{EGA2}
A.~Grothendieck, \emph{\'{E}l\'ements de g\'eom\'etrie alg\'ebrique. {II}.
  \'{E}tude globale \'el\'ementaire de quelques classes de morphismes}, Inst.
  Hautes \'Etudes Sci. Publ. Math. (1961), no.~8.

\bibitem[Har77]{hartshorne}
R.~Hartshorne, \emph{Algebraic geometry}, Springer, 1977.

\bibitem[Orl97]{orlov}
D.~O. Orlov, \emph{Equivalences of derived categories and {K}3 surfaces}, J.
  Math. Sci. (New York) \textbf{84} (1997), no.~5, 1361--1381.

\bibitem[Ser07]{FAC}
J-P. Serre, \emph{Faisceaux alg\'ebriques coh\'erents}, vol.~61, The Annals of
  Mathematics , Second Series, 2007.

\end{thebibliography}
\bibliographystyle{amsalpha}

\end{document}